\documentclass[11pt,a4paper]{article}
\usepackage[margin=2cm]{geometry}
\usepackage{amsmath,amsthm,amsfonts,amssymb,amscd,cite,graphicx}
\setlength{\arrayrulewidth}{0.2pt}
\usepackage{latexsym}
\usepackage{enumitem}

\usepackage[usenames, dvipsnames]{color}
\definecolor{mygray}{gray}{0.6}

\usepackage{titlesec}
\titleformat{\section}
{\normalfont\fontsize{12}{15}\bfseries}{\thesection}{1em.}{}

\newtheorem{proposition}{Proposition}[section]

\newtheorem{corollary}{Corollary}[section]
\newtheorem{lemma}{Lemma}[section]

\newtheorem{theorem}{Theorem}[section]
\newtheorem{remark}{Remark}[section]

\allowdisplaybreaks[4]

\let\oldbibliography\thebibliography
\renewcommand{\thebibliography}[1]{%
  \oldbibliography{#1}%
  \setlength{\itemsep}{-2pt}%
}

\baselineskip=1.20in

\begin{document}

\baselineskip=0.20in

\makebox[\textwidth]{%
\hglue-15pt
\begin{minipage}{0.6cm}	
\vskip9pt
\end{minipage} \vspace{-\parskip}
\hfill
}
\vskip36pt

\noindent
{\large \bf Statistics on bargraphs of inversion sequences of permutations}\\

\noindent
Toufik Mansour and Mark Shattuck\\

\noindent
\footnotesize {\it Department of Mathematics, University of Haifa, 3498838 Haifa, Israel\\
Email: tmansour@univ.haifa.ac.il}\\

\noindent
\footnotesize {\it Department of Mathematics, University of Tennessee,
37996 Knoxville, TN\\
Email: mshattuc@utk.edu}\\

\setcounter{page}{1} \thispagestyle{empty}

\baselineskip=0.20in

\normalsize

\begin{abstract}
An inversion sequence $(x_1,\ldots,x_n)$ is one such that $1 \leq x_i \leq i$ for all $1 \leq i \leq n$.  We first consider the joint distribution of the area and perimeter statistics on the set $I_n$ of inversion sequences of length $n$ represented as bargraphs.  Functional equations for both the ordinary and exponential generating functions are derived from recurrences satisfied by this distribution.  Explicit formulas are found in some special cases as are expressions for the totals of the respective statistics on $I_n$.  A similar treatment is provided for the joint distribution on $I_n$ for the statistics recording the number of levels, descents and ascents.  Some connections are made between specific cases of this latter distribution and the Stirling numbers of the first kind and Eulerian numbers.
\end{abstract}

\section{Introduction}

Given a permutation $\pi=\pi_1\cdots\pi_n$ of $[n]=\{1,\ldots,n\}$, represented using the one-line notation, the sequence $\textbf{a}=a_1\cdots a_n$ in which $a_i$ records the number of elements of $[i-1]$ occurring to the right of the letter $i$ in $\pi$ for $1 \leq i \leq n$ is called the \emph{inversion table}, or \emph{inversion sequence}, of $\pi$ (see, e.g., \cite[p.~21]{RS}).  For example, $\pi=524613 \in S_6$ has inversion table $\textbf{a}=010242$; note that $0 \leq a_i \leq i-1$ for all $i$.  Conversely, starting with the inversion table $\textbf{a}$, it is seen that one can reconstruct the corresponding permutation $\pi$.  Thus, one may view the inversion table as an alternative representation of the permutation $\pi$.  For our purposes, we will  add $1$ to each entry of $\textbf{a}$ since it will be more convenient to represent the resulting sequence geometrically.

Here, we consider various statistics on sequences $\rho=\rho_1\cdots\rho_n$ of integers satisfying $1 \leq \rho_i \leq i$ for all $i$.  In analogy with avoidance on permutations, the pattern avoidance problem on inversion sequences has been studied from several perspectives, initiated in the papers \cite{MS} and \cite{CMS} concerning the classical avoidance of a single permutation or word pattern of length three.  See, e.g., \cite{KL,Lin,LY,MSa} for extensions of this work in various directions.   Here, we consider new restrictions on inversion sequences obtained in connection with certain statistics on their bargraph representation.

Recall that a bargraph is a self-avoiding random walk in the first quadrant starting at the origin and ending at $(n,0)$ consisting of up $(0,1)$, down $(0,-1)$ and horizontal $(1,0)$ steps.  A sequence $\sigma=\sigma_1\cdots \sigma_n$ of positive integers may be represented as a bargraph $\textbf{b}$ by requiring that the $i$-th column of $\textbf{b}$ contain $\sigma_i$ cells for $1 \leq i \leq n$ (i.e., the height above the $x$-axis of the $i$-th horizontal step is $\sigma_i$). For instance, the permutation $\pi=524613$ above, which has associated inversion sequence $x=121353$ (add $1$ to each entry in the inversion table), may be represented by the bargraph in Figure 1 below.  For examples of recent statistics on bargraphs, see, e.g., \cite{Bl1,Bl2,Bl3} and references contained therein.

In the next section, we consider the joint distribution of the area and perimeter statistics on the set $I_n$ of inversion sequences of length $n$.  We find a recurrence for this distribution on $I_n$ as well as explicit formulas for the total area and perimeter on $I_n$.  In the third section, a comparable treatment is provided for the levels, descents and ascents statistics on $I_n$.  In addition to finding expressions for the totals of these statistics on $I_n$, the distribution is determined explicitly in some specific cases.  We remark that the exponential generating functions of the two joint distributions featured in this paper both satisfy linear first-order functional differential equations with general parameters.  Furthermore, the ordinary generating functions of the distributions can be found explicitly in some general cases by iteration of a functional equation.

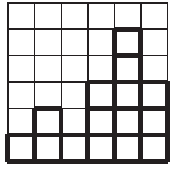
\begin{figure}[htp]
\begin{center}
\begin{picture}(70,60)
\setlength{\unitlength}{.35mm}
\linethickness{0.02mm}
\multiput(0,0)(0,10){7}{\line(2,0){60}}
\multiput(0,0)(10,0){7}{\line(0,2){60}}
\linethickness{0.5mm}
\put(0,0){\line(1,0){60}}
\put(0,10){\line(1,0){60}}
\put(10,20){\line(1,0){10}}
\put(30,20){\line(1,0){30}}\put(30,30){\line(1,0){30}}
\put(40,40){\line(1,0){10}}\put(40,50){\line(1,0){10}}
\put(0,0){\line(0,1){10}}\put(10,0){\line(0,1){20}}
\put(20,0){\line(0,1){20}}\put(30,0){\line(0,1){30}}
\put(40,0){\line(0,1){50}}\put(50,0){\line(0,1){50}}
\put(60,0){\line(0,1){30}}
\end{picture}
\caption{Bargraph of inversion sequence of $\pi=524613 \in S_6$}\label{figaa}
\end{center}
\end{figure}

\section{Area and perimeter  statistics}

In this section, we study the area and perimeter statistics on bargraphs of inversion sequences.  Recall that the area of a bargraph $\lambda$ is that of the first quadrant region subtended by $\lambda$, whereas the perimeter corresponds to the total number of steps of $\lambda$ together with the length of its bottom boundary along the $x$-axis.  It will be convenient to consider the refinement of these statistics to sequences ending in a particular letter.  Given $n \geq 1$ and $1 \leq i \leq n$, let $I_{n,i}$ denote the set of inversion sequences of length $n$ whose last letter is $i$.  Since the perimeter of a bargraph is always even as it includes the bottom boundary, one can consider equivalently the statistic recording half the perimeter, i.e., the semi-perimeter.  Let $a_{n,i}(p,q)$ denote the joint distribution on $I_{n,i}$ for the area and semi-perimeter statistics (marked by $p$ and $q$, respectively).  That is, $a_{n,i}(p,q)=\sum_{\rho \in I_{n,i}}p^{\text{area}(\rho)}q^{\text{sper}(\rho)}$, where $\text{area}(\rho)$ and $\text{sper}(\rho)$ denote respectively the area and semi-perimeter of the bargraph representation of the inversion sequence $\rho$.  For example, $\text{area}(\rho)=15$ and $\text{sper}(\rho)=12$ for the $\rho$ pictured in Figure \ref{figaa}.

The polynomials $a_{n,i}(p,q)$ are determined recursively as follows.

\begin{lemma}\label{lem1}
If $n \geq 2$ and $1 \leq i \leq n$, then
\begin{equation}\label{lem1e1}
a_{n,i}(p,q)=p^iq\sum_{j=i}^{n-1}a_{n-1,j}(p,q)+p^iq\sum_{j=1}^{i-1}q^{i-j}a_{n-1,j}(p,q),
\end{equation}
with $a_{1,1}(p,q)=pq^2$.
\end{lemma}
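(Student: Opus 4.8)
The plan is to set up a bijection between $I_{n,i}$ and $\bigcup_{j=1}^{n-1} I_{n-1,j}$ via deletion (respectively, appending) of the final letter, and then to track exactly how the area and semi-perimeter change under this operation. First I would observe that removing the last letter $i$ from any $\rho = \rho_1 \cdots \rho_{n-1} i \in I_{n,i}$ yields a sequence $\rho' = \rho_1 \cdots \rho_{n-1} \in I_{n-1,j}$, where $j = \rho_{n-1}$ ranges over $\{1, \ldots, n-1\}$; conversely, since $i \leq n$, appending the letter $i$ to any $\rho' \in I_{n-1,j}$ produces a valid member of $I_{n,i}$. Hence $a_{n,i}(p,q) = \sum_{j=1}^{n-1} w_{i,j}(p,q)\, a_{n-1,j}(p,q)$ for suitable monomial weights $w_{i,j}$, and the entire task reduces to determining how appending a column of height $i$ to a bargraph whose rightmost column has height $j$ affects the two statistics.

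For the area, I would simply note that the new column of height $i$ contributes $i$ additional cells, so $\text{area}(\rho) = \text{area}(\rho') + i$, which accounts for the common factor $p^i$ appearing in both sums. For the semi-perimeter I would argue locally, since every column of $\rho'$ is left in place and only the rightmost boundary configuration is altered. Appending the new column adds one horizontal step along the top and one unit to the bottom boundary (contributing $2$ to the perimeter), replaces the former right edge of height $j$ by a fresh right edge of height $i$, and creates an internal vertical segment of height $|i - j|$ between the two rightmost columns. Consequently the perimeter increases by $(i - j) + |i - j| + 2$, so the semi-perimeter increases by $1 + \tfrac{1}{2}\bigl((i-j) + |i-j|\bigr)$.

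The decisive step is the case analysis on the sign of $i - j$. When $j \geq i$ we have $|i-j| = j - i$, so the semi-perimeter increases by exactly $1$, yielding weight $p^i q$ and producing the first sum $\sum_{j=i}^{n-1}$. When $j < i$ we have $|i-j| = i - j$, so the semi-perimeter increases by $1 + (i-j)$, yielding weight $p^i q^{1+(i-j)} = p^i q \cdot q^{i-j}$ and producing the second sum $\sum_{j=1}^{i-1}$. I expect the principal obstacle to be the perimeter bookkeeping rather than any deeper difficulty: one must keep careful track of the fact that the old right edge of height $j$ becomes an internal edge of height $|i-j|$ while a new right edge of height $i$ appears, and it is this $(i-j) + |i-j|$ combination that splits cleanly into the two cases. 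Finally I would dispatch the base case directly, noting that $I_{1,1} = \{1\}$ and that the single unit cell has area $1$ and perimeter $4$, so that $\text{sper} = 2$ and $a_{1,1}(p,q) = pq^2$.
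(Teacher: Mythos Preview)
Your proposal is correct and follows essentially the same approach as the paper: both arguments remove the last column, note that the area increases by $i$, and split the semi-perimeter computation into the cases $j\ge i$ and $j<i$. Your treatment of the perimeter is slightly more explicit---you compute the net change as $(i-j)+|i-j|+2$ by tracking the replacement of the old right edge by the new right edge plus the internal step, whereas the paper argues more informally that no new vertical boundary is exposed when $j\ge i$ and that $i-j$ extra cells are exposed when $j<i$---but this is a presentational difference rather than a substantive one.
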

\begin{proof}
The initial condition follows from the definitions, so assume $n \geq 2$.  To show \eqref{lem1e1}, let $\lambda \in I_{n,i}$ and let $\lambda' \in I_{n-1,j}$ denote the inversion sequence obtained by removing the final column $x$ in the bargraph of $\lambda$.  Note that if $j \in [i,n-1]$, then removing $x$ from $\lambda$ in essence exposes the right boundary of no new cells since the $i$ bottom cells in the last column of $\lambda'$ replace the cells of $x$ in this regard.  Upon taking into account the additional horizontal step, we have that the semi-perimeter increases by one in going from $\lambda'$ to $\lambda$, while the area increases by $i$.  This yields a contribution of $p^iqa_{n-1,j}(p,q)$ for such $\lambda$ towards the overall weight.  Considering all possible $j \in [i,n-1]$ accounts for the first sum on the right side of \eqref{lem1e1}.  On the other hand, if $j\in [i-1]$, then there is in addition an increase of $i-j$ in the semi-perimeter to account for the cells of $x$ that are at a height strictly greater than $j$.  Thus, there are $p^iq^{i-j+1}a_{n-1,j}(p,q)$ such $\lambda$ in this case and considering all $j<i$ accounts for the second sum on the right and completes the proof.
\end{proof}

Let $a_{n,i}=a_{n,i}(p,q)$ and $a_n(y)=a_n(y;p,q)=\sum_{i=1}^na_{n,i}y^i$ for $n \geq 1$.  For example, $a_1(y)=ypq^2$, $a_2(y)=yp^2q^3+y^2p^3q^4$, $a_3(y)=yp^3q^4(1+pq)+y^2p^4q^5(1+p)+y^3p^5q^6(1+p)$.

Multiplying both sides of \eqref{lem1e1} by $y^i$, summing over $1 \leq i \leq n$ and interchanging summation yields
\begin{align}
a_n(y)&=q\sum_{i=1}^n(yp)^i\sum_{j=i}^{n-1}a_{n-1,j}+q\sum_{i=1}^n(yp)^i\sum_{j=1}^{i-1}q^{i-j}a_{n-1,j}\notag\\
&=q\sum_{j=1}^{n-1}a_{n-1,j}\sum_{i=1}^j(yp)^i+q\sum_{j=1}^{n-1}a_{n-1,j}q^{-j}\sum_{i=j+1}^n(ypq)^i\notag\\
&=q\sum_{j=1}^{n-1}a_{n-1,j}\cdot\frac{yp-(yp)^{j+1}}{1-yp}+q\sum_{j=1}^{n-1}a_{n-1,j}q^{-j}\cdot \frac{(ypq)^{j+1}-(ypq)^{n+1}}{1-ypq}\notag\\
&=\frac{ypq}{1-yp}(a_{n-1}(1)-a_{n-1}(yp))+\frac{ypq^2}{1-ypq}\left(a_{n-1}(yp)-(ypq)^na_{n-1}(1/q)\right), \qquad n \geq 2,\label{perime2}
\end{align}
where we have used the fact $a_n(1)=\sum_{i=1}^na_{n,i}$.

Define the exponential generating function $f(x,y)=f(x,y;p,q)$ by
$$f(x,y)=\sum_{n\geq 1}a_n(y)\frac{x^n}{n!}.$$
Multiplying both sides of \eqref{perime2} by $\frac{x^{n-1}}{(n-1)!}$, and summing over $n \geq 2$, implies $f(x,y)$ satisfies the following linear functional differential equation

\begin{theorem}
We have
\begin{align}
\frac{\partial}{\partial x}f(x,y)&=ypq^2+\frac{ypq}{1-yp}(f(x,1)-f(x,yp))+\frac{ypq^2}{1-ypq}(f(x,yp)-ypqf(xypq,1/q)).\label{fxye1}
\end{align}
\end{theorem}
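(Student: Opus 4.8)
The plan is to translate the recurrence \eqref{perime2} directly into a statement about the exponential generating function $f(x,y)$ via the prescribed operation: multiply both sides by $\frac{x^{n-1}}{(n-1)!}$ and sum over $n \geq 2$. The left side produces $\sum_{n \geq 2} a_n(y)\frac{x^{n-1}}{(n-1)!}$, which is almost $\frac{\partial}{\partial x}f(x,y)=\sum_{n \geq 1} a_n(y)\frac{x^{n-1}}{(n-1)!}$; the only discrepancy is the absent $n=1$ term, namely $a_1(y)=ypq^2$. Hence I would begin by recording the identity $\frac{\partial}{\partial x}f(x,y)=ypq^2+\sum_{n \geq 2} a_n(y)\frac{x^{n-1}}{(n-1)!}$, which accounts for the additive constant $ypq^2$ on the right side of \eqref{fxye1}.

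Next I would process the right side term by term, reindexing each sum by $m=n-1$ so that the summation runs over $m \geq 1$, matching the defining range of $f$. The first bracketed contribution $\frac{ypq}{1-yp}\sum_{n \geq 2}(a_{n-1}(1)-a_{n-1}(yp))\frac{x^{n-1}}{(n-1)!}$ becomes $\frac{ypq}{1-yp}\sum_{m \geq 1}(a_m(1)-a_m(yp))\frac{x^m}{m!}$, which is recognized as $\frac{ypq}{1-yp}(f(x,1)-f(x,yp))$, since substituting $y \mapsto yp$ in the series for $f$ replaces each polynomial $a_m(y)$ by $a_m(yp)$. The same reindexing applied to $\sum_{n \geq 2} a_{n-1}(yp)\frac{x^{n-1}}{(n-1)!}$ yields $f(x,yp)$.

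The step I expect to require the most care is the remaining summand $\sum_{n \geq 2}(ypq)^n a_{n-1}(1/q)\frac{x^{n-1}}{(n-1)!}$, because here the exponential factor $(ypq)^n$ interacts with the monomial $x^{n-1}$. Writing $(ypq)^n=ypq\cdot(ypq)^{n-1}$ and absorbing $(ypq)^{n-1}$ into $x^{n-1}$ converts the monomial into $(xypq)^{n-1}$; after reindexing, this sum equals $ypq\sum_{m \geq 1} a_m(1/q)\frac{(xypq)^m}{m!}$, which is precisely $ypq\,f(xypq,1/q)$, the dilation $x \mapsto xypq$ of the first argument together with the evaluation $y \mapsto 1/q$. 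Recognizing that $(ypq)^n$ forces a rescaling of the $x$-variable, rather than a mere substitution in $y$, is the conceptual crux; once it is identified, the second bracketed contribution assembles into $\frac{ypq^2}{1-ypq}(f(x,yp)-ypq\,f(xypq,1/q))$. Collecting the constant term with the two bracketed expressions then reproduces \eqref{fxye1} verbatim, completing the proof.
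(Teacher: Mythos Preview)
Your proposal is correct and follows exactly the approach indicated in the paper: multiply \eqref{perime2} by $\frac{x^{n-1}}{(n-1)!}$, sum over $n\geq 2$, supply the missing $n=1$ term $a_1(y)=ypq^2$, and recognize each resulting series as an evaluation or dilation of $f$. The only place the paper leaves implicit is the handling of the $(ypq)^n a_{n-1}(1/q)$ term, and you have treated it correctly by factoring out one copy of $ypq$ and absorbing the remaining $(ypq)^{n-1}$ into the $x$-variable to obtain $ypq\,f(xypq,1/q)$.
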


Note that \eqref{fxye1} with $p=q=1$ gives
\begin{align*}
\frac{\partial}{\partial x}f(x,y;1,1)&=y+\frac{y}{1-y}f(x,1;1,1)-\frac{y^2}{1-y}f(xy,1;1,1).
\end{align*}
From this, one can obtain
\begin{equation}\label{fxy11}
f(x,y;1,1)=\frac{y}{1-y}\ln\left(\frac{1-xy}{1-x}\right).
\end{equation}
In particular $f(x,1;1,1)=\frac{x}{1-x}$.

To determine a formula for the total area of all members of $I_n$, let us consider the derivative with respect
to $p$ evaluated at $p=1$.  Define $Pf(x,y)=\frac{\partial}{\partial p}f(x,y;p,1)\mid_{p=1}=\sum_{m\geq1}Pf_m(y)\frac{x^m}{m!}$. Then \eqref{fxye1}, taken together with \eqref{fxy11}, gives
\begin{align*}
\frac{\partial}{\partial x}Pf(x,y)&=\frac{y}{1-y}Pf(x,1)-\frac{y^2}{1-y}Pf(xy,1)+\frac{y}{(1-x)(1-xy)^2}.
\end{align*}
Thus, $Pf_{m+1}(y)=\frac{y-y^{m+2}}{1-y}Pf_m(1)+m!\sum_{i=1}^{m+1}iy^i$ for $m \geq 0$, with $Pf_0(y)=0$. By induction on $m$, we have
$$Pf_m(1)=\frac{m!}{2}\left(\binom{m+2}{2}-1\right),$$
which leads to
$$Pf_{m+1}(y)=m!\frac{y-y^{m+2}}{2(1-y)}\left(\binom{m+2}{2}-1\right)+m!\sum_{i=1}^{m+1}iy^i.$$
From this, one can find $\sum_{m\geq1}Pf_m(y)\frac{x^m}{m!}$ explicitly.

\begin{corollary}
The generating function $Pf(x,y)$ for the sum of the areas of all members of $I_{n,j}$ for $n \geq 1$ and $1 \leq j \leq n$ is given by
\begin{align*}
&\frac{y(1+y)}{2(1-y)^2}\ln\left(\frac{1-xy}{1-x}\right)
+\frac{(4x^3y^2-8x^2y^2-4x^2y+5xy^2+8xy-x-6y+2)xy}{4(1-xy)^2(1-x)^2(1-y)}\\
&=yx+y(3y+2)\frac{x^2}{2!}+y(11y^2+9y+7)\frac{x^3}{3!}+3y(17y^3+15y^2+13y+11)\frac{x^4}{4!}+\cdots.
\end{align*}
In particular, the sum of the areas of all members of $I_n$ is given by $\frac{n!}{2}\left(\binom{n+2}{2}-1\right)$.
\end{corollary}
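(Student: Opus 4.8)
The plan is to start from the explicit formula for $Pf_{m+1}(y)$ established just before the corollary and feed it into the definition $Pf(x,y)=\sum_{m\geq1}Pf_m(y)\frac{x^m}{m!}$. The ``in particular'' clause is in fact immediate: since $Pf(x,1)$ is the exponential generating function for the total area on $I_n$, the total area equals $Pf_n(1)$, and the value $Pf_m(1)=\frac{m!}{2}\left(\binom{m+2}{2}-1\right)$ has already been computed, so taking $m=n$ gives the stated formula. Hence the real work lies in producing the closed form of the bivariate generating function. (An alternative would be to integrate the functional differential equation for $Pf(x,y)$ after substituting the now-known $Pf(x,1)=\frac{x(2-x)}{2(1-x)^3}$, but the direct summation below is cleaner.)

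First I would reindex by $n=m+1$, writing, for $n\geq1$,
$$\frac{Pf_n(y)}{n!}=\frac1n\left[\frac{y-y^{n+1}}{2(1-y)}\left(\binom{n+1}{2}-1\right)+\sum_{i=1}^n iy^i\right],$$
using $(n-1)!/n!=1/n$ and $Pf_0(y)=0$. I would then split $Pf(x,y)=\sum_{n\geq1}\frac{Pf_n(y)}{n!}x^n$ into a ``binomial'' part $A$ arising from the first bracketed term and a ``double-sum'' part $B$ arising from $\sum_{i=1}^n iy^i$. For $A$, I would use $\binom{n+1}{2}-1=\frac{(n+2)(n-1)}{2}$ together with $\frac{(n+2)(n-1)}{n}=n+1-\frac2n$ to reduce $A$ to a linear combination of the standard sums $\sum_{n\geq1}nz^n=\frac{z}{(1-z)^2}$, $\sum_{n\geq1}z^n=\frac{z}{1-z}$ and $\sum_{n\geq1}\frac{z^n}{n}=-\ln(1-z)$, evaluated at $z=x$ (from the $y$-term) and at $z=xy$ (from the $y^{n+1}$-term, after factoring out one $y$). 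For $B$, I would collapse the inner sum via $\sum_{i=1}^n iy^i=\frac{y-(n+1)y^{n+1}+ny^{n+2}}{(1-y)^2}$; the $ny^{n+2}$ piece cancels the $\frac1n$ and the $(n+1)y^{n+1}$ piece splits as $1+\frac1n$, so $B$ too reduces to the same three standard sums at $z=x$ and $z=xy$.

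Finally I would collect terms. All the logarithmic contributions are multiples of $\ln(1-x)$ and $\ln(1-xy)$, and I expect them to assemble into $\frac{y(1+y)}{2(1-y)^2}\ln\left(\frac{1-xy}{1-x}\right)$; the remaining rational contributions, with denominators among $(1-x)$, $(1-x)^2$, $(1-xy)$, $(1-xy)^2$, should combine over the common denominator $4(1-xy)^2(1-x)^2(1-y)$ to the displayed fraction. As a check I would match the low-order coefficients $yx$, $y(3y+2)\frac{x^2}{2!}$, and $y(11y^2+9y+7)\frac{x^3}{3!}$ against direct evaluation of $Pf_n(y)/n!$.

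The main obstacle is purely the algebraic bookkeeping in this last merge: there are many rational pieces to reconcile, and both the logarithm coefficient $\frac{y(1+y)}{2(1-y)^2}$ and the rational term carry a pole at $y=1$ that must cancel, since $Pf(x,y)$ is regular there with $Pf(x,1)=\frac{x(2-x)}{2(1-x)^3}$. The $y\to1$ limit is therefore the most error-prone spot, and I would verify that cancellation explicitly as a consistency check on the final expression.
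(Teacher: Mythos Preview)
Your proposal is correct and follows exactly the route the paper takes: the paper derives the explicit formula for $Pf_{m+1}(y)$ and then simply asserts ``From this, one can find $\sum_{m\geq1}Pf_m(y)\frac{x^m}{m!}$ explicitly,'' omitting the actual computation; what you have written is precisely that computation fleshed out, and your treatment of the ``in particular'' clause via the already-established $Pf_m(1)$ matches the paper's as well.
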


Now let us consider the derivative at $q=1$.  Define $Qf(x,y)=\frac{\partial}{\partial q}f(x,y;1,q)\mid_{q=1}=\sum_{m\geq1}Qf_m(y)\frac{x^m}{m!}$. Then \eqref{fxye1}, together with \eqref{fxy11}, gives
\begin{align*}
\frac{\partial}{\partial x}Qf(x,y)&=\frac{y}{1-y}Qf(x,1)-\frac{y^2}{1-y}Qf(xy,1)+\frac{y^2}{(1-y)^3}\ln\left(\frac{1-xy}{1-x}\right)\\
&-\frac{(2(x-2)-2(2x^2-x-4)y+2(x+2)(x^2-x-1)y^2-x(x^2+x-4)y^3+x^2(x-1)y^4)y}{2(1-y)^2(1-x)(1-xy)^2}.
\end{align*}
By finding the coefficient of $x^m/m!$, and taking the limit as $y\rightarrow1$, we have
$$Qf_{m+1}(1)=(m+1)Qf_m(1)+\frac{1}{6}(m+8)(m+1)!, \qquad m \geq 1,$$
with $Qf_1(1)=2$. Hence, by induction on $m$, we obtain
$$Qf_m(1)=\frac{1}{12}(m^2+15m+8)m!,$$ which implies
the following result.

\begin{corollary}
The sum of the semi-perimeters of all members of $I_n$ for $n \geq 1$ is given by
$$\frac{1}{12}(n^2+15n+8)n!.$$
\end{corollary}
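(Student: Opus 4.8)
The plan is to recognize that $Qf_m(1)$ is, by construction, exactly the total semi-perimeter over all inversion sequences in $I_m$, and then to extract its closed form from the recurrence already recorded in the excerpt. First I would confirm this combinatorial interpretation. Since $a_m(1;1,q)=\sum_{\rho\in I_m}q^{\text{sper}(\rho)}$ (take $y=1$, $p=1$ in $a_m(y;p,q)=\sum_i a_{m,i}(p,q)y^i$), differentiating in $q$ and evaluating at $q=1$ gives $\frac{\partial}{\partial q}a_m(1;1,q)\mid_{q=1}=\sum_{\rho\in I_m}\text{sper}(\rho)$. As $Qf(x,1)=\frac{\partial}{\partial q}f(x,1;1,q)\mid_{q=1}=\sum_{m\geq1}\big(\sum_{\rho\in I_m}\text{sper}(\rho)\big)\frac{x^m}{m!}$, the coefficient $Qf_m(1)$ is precisely the total semi-perimeter on $I_m$. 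Thus the corollary reduces to establishing the formula $Qf_m(1)=\frac{1}{12}(m^2+15m+8)m!$.

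For this I would solve the recurrence $Qf_{m+1}(1)=(m+1)Qf_m(1)+\frac16(m+8)(m+1)!$, with $Qf_1(1)=2$, by induction on $m$. The base case is immediate, since $\frac{1}{12}(1+15+8)\cdot1!=2$. For the inductive step, substituting the induction hypothesis into the recurrence yields
$$Qf_{m+1}(1)=(m+1)\cdot\frac{1}{12}(m^2+15m+8)m!+\frac16(m+8)(m+1)!=\frac{(m+1)!}{12}\big[(m^2+15m+8)+2(m+8)\big],$$
and the bracketed quantity simplifies to $m^2+17m+24=(m+1)^2+15(m+1)+8$, which is exactly the asserted formula with $m$ replaced by $m+1$.

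I expect no genuine obstacle within the corollary itself: once the recurrence is in hand, the argument is a short algebraic induction. The real difficulty lies upstream, in the derivation of that recurrence from the functional differential equation for $\frac{\partial}{\partial x}Qf(x,y)$, where extracting the coefficient of $x^m/m!$ and then letting $y\to1$ requires resolving the apparent singularities produced by the $\frac{1}{1-y}$, $\frac{1}{(1-y)^2}$, and $\frac{1}{(1-y)^3}$ factors (via a careful expansion near $y=1$ or repeated application of L'Hôpital's rule). Since that recurrence and the value $Qf_1(1)=2$ are already established in the excerpt, I would take them as given and present only the interpretation step together with the inductive solution above.
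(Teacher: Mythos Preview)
Your proposal is correct and follows essentially the same approach as the paper: the paper derives the recurrence $Qf_{m+1}(1)=(m+1)Qf_m(1)+\frac{1}{6}(m+8)(m+1)!$ with $Qf_1(1)=2$ in the text immediately preceding the corollary and then states that induction on $m$ yields $Qf_m(1)=\frac{1}{12}(m^2+15m+8)m!$. You have simply filled in the details of that induction and made explicit the combinatorial interpretation of $Qf_m(1)$ as the total semi-perimeter on $I_m$, which the paper leaves implicit in the phrase ``which implies the following result.''
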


We have the following sign balance result for the area and semi-perimeter statistics on $I_n$.

\begin{proposition}
If $n \geq 3$, then
\begin{equation}\label{p=-1}
a_n(y;-1,1)=0
\end{equation}
and
\begin{equation}\label{q=-1}
a_n(y;1,-1)=2^{n-2}y^{n-1}(y-1).
\end{equation}
\end{proposition}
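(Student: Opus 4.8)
The plan is to work directly from the functional equation \eqref{perime2}, specializing the parameters to $(p,q)=(-1,1)$ and then to $(p,q)=(1,-1)$, and to argue by induction on $n$ in each case. The explicit polynomials $a_1(y),a_2(y),a_3(y)$ recorded just after Lemma \ref{lem1} supply the base cases and the needed scalar values.

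For \eqref{p=-1}, write $b_n(y)=a_n(y;-1,1)$. Setting $p=-1,\,q=1$ in \eqref{perime2} gives $yp=-y$, $ypq=-y$, $1/q=1$, and both prefactors $\frac{ypq}{1-yp}$ and $\frac{ypq^2}{1-ypq}$ collapse to $\frac{-y}{1+y}$. The two occurrences of $b_{n-1}(-y)$ then cancel, leaving the single-term recurrence
$$b_n(y)=\frac{-y}{1+y}\,b_{n-1}(1)\bigl(1-(-y)^n\bigr),\qquad n\ge 2,$$
so that $b_n(y)$ is controlled entirely by the scalar $b_{n-1}(1)$. From $a_2(y)$ one reads off $b_2(1)=1-1=0$; feeding this into the displayed recurrence forces $b_3\equiv 0$, whence $b_3(1)=0$, and an immediate induction (if $b_{n-1}(1)=0$ then $b_n\equiv 0$, so $b_n(1)=0$) yields $b_n(y)=0$ for every $n\ge 3$.

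For \eqref{q=-1}, write $c_n(y)=a_n(y;1,-1)$, so that now $yp=y$, $ypq=-y$, $1/q=-1$, $\frac{ypq}{1-yp}=\frac{-y}{1-y}$, and $\frac{ypq^2}{1-ypq}=\frac{y}{1+y}$. I would prove $c_n(y)=2^{n-2}y^{n-1}(y-1)$ by induction, checking the base case directly from $a_3(y)$, which gives $c_3(y)=2y^2(y-1)$. For the inductive step, the hypothesis $c_{n-1}(y)=2^{n-3}y^{n-2}(y-1)$ supplies the two needed values $c_{n-1}(1)=0$ and $c_{n-1}(-1)=2^{n-2}(-1)^{n-1}$. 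Substituting into the specialized form of \eqref{perime2} and using the identity $(-y)^n(-1)^{n-1}=-y^n$, the three resulting terms combine — after clearing the common denominator $1+y$ and factoring $2y^2+y-1=(2y-1)(y+1)$ to cancel it — to give exactly $2^{n-2}y^{n-1}(y-1)$, completing the induction.

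The main obstacle is the algebraic bookkeeping in the second part: one must track carefully the signs arising from the odd powers of $q=-1$ and from $(-y)^n$, and the cancellation of the $1+y$ denominator hinges on the factorization $2y^2+y-1=(2y-1)(y+1)$. The first part, by contrast, is essentially free once one observes the cancellation reducing the recurrence to a single term depending only on $b_{n-1}(1)$, together with the vanishing $b_2(1)=0$.
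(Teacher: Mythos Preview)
Your proof is correct. The induction from \eqref{perime2} goes through exactly as you describe: for $p=-1$ the two occurrences of $b_{n-1}(-y)$ cancel, the scalar $b_2(1)=0$ kills everything from $n=3$ onward, and for $q=-1$ the factor $2y^2+y-1=(2y-1)(y+1)$ is precisely what is needed to clear the $1+y$ in the denominator and close the induction.

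The paper, however, takes a different route. It notes in one sentence that the induction argument from \eqref{perime2} is available, and then instead gives a direct bijective proof via sign-changing involutions. For \eqref{p=-1} the involution is simply $\rho_2\mapsto 3-\rho_2$, which alters the area by one. For \eqref{q=-1} it locates the smallest index $k$ with $\rho_k\notin\{k-1,k\}$ and swaps $\rho_{k-1}\leftrightarrow 2k-3-\rho_{k-1}$; this changes the semi-perimeter by one, and the survivors of the involution are exactly the $2^{n-2}$ sequences in $I_{n,n-1}$ and the $2^{n-2}$ in $I_{n,n}$ with $\rho_i\in\{i-1,i\}$ throughout, whose semi-perimeters $2n-1$ and $2n$ account for the sign pattern in $2^{n-2}y^{n-1}(y-1)$. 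Your approach is cleaner to write down and entirely self-contained once \eqref{perime2} is in hand; the paper's approach explains combinatorially \emph{why} the answer has the shape it does, in particular identifying the fixed set responsible for the nonzero term in \eqref{q=-1}.
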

\begin{proof}
Formulas \eqref{p=-1} and \eqref{q=-1} can be obtained from \eqref{perime2} (and the equation directly prior) by an induction argument.  Here, we provide a direct bijective proof.  Let $\rho=\rho_1\cdots \rho_n \in I_n$ where $n \geq 3$. Note first that replacing $\rho_2$ with $3-\rho_2$ changes the area of the bargraph of $\rho$ by one for all $\rho$, which implies \eqref{p=-1}. Now let $k$ be the smallest index $i$, if it exists, such that $\rho_i \notin \{i-1,i\}$; note that $k \geq 3$.  Within $\rho$, consider replacing $\rho_{k-1}$ with $2k-3-\rho_{k-1}$ to obtain $\rho' \in I_n$.  Let $m=\max\{\rho_{k-2},\rho_k\}$.  Note that $\rho_{k-1}=m$ implies $2k-3-\rho_{k-1}>m$ and $\rho_{k-1}>m$ implies $2k-3-\rho_{k-1}\geq m$.  Since $\rho_{k-1} \geq m$, it follows that the bargraphs of $\rho$ and $\rho'$ have semi-perimeters differing by one and hence are of opposite parity.  Thus, the mapping $\rho \mapsto \rho'$ provides a sign-changing involution on all of $I_n$ for which $k$ is defined.

Note that this mapping is defined on the entirety of $I_{n,j}$ if $j \in [n-2]$ since $k$ is guaranteed to exist in this case.  If $\rho=\rho_1\cdots \rho_n \in I_{n,j}$ for $j=n-1$ or $n$, then no such $k$ exists if and only if $\rho_i \in \{i-1,i\}$ for $2 \leq i \leq n-1$.  Thus, there are $2^{n-2}$ possible members of $I_{n,j}$ in either case.  Finally, each possible member of $I_{n,n-1}$ and $I_{n,n}$ is (weakly) increasing and hence has semi-perimeter $2n-1$ or $2n$, respectively, which explains the signs and completes the proof of \eqref{q=-1}.
\end{proof}

The array $a_{n,i}$ may also be determined by the following three-term recurrence.

\begin{proposition}\label{perprop}
If $n \geq 3$, then
\begin{equation}\label{aeq2}
a_{n,i}=p(q+1)a_{n,i-1}-p^2qa_{n,i-2}+p^iq(q-1)a_{n-1,i-1}, \qquad 3 \leq i \leq n,
\end{equation}
with $a_{n,1}=pqa_{n-1}(1)$ and $a_{n,2}=pa_{n,1}+p^2q(q-1)a_{n-1,1}$ for $n \geq 2$ and $a_{1,1}=pq^2$.
\end{proposition}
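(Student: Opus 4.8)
The plan is to derive \eqref{aeq2} directly from Lemma \ref{lem1} by splitting the right-hand side of \eqref{lem1e1} into two pieces, each obeying a first-order recurrence in the index $i$, and then combining these into the stated second-order recurrence. First I would introduce the abbreviations
\[
u_{n,i}=p^iq\sum_{j=i}^{n-1}a_{n-1,j}, \qquad v_{n,i}=p^iq\sum_{j=1}^{i-1}q^{i-j}a_{n-1,j},
\]
so that $a_{n,i}=u_{n,i}+v_{n,i}$ by \eqref{lem1e1}. The advantage of this split is that the tail sum defining $u_{n,i}$ changes by the single term $a_{n-1,i-1}$ when $i$ is decreased by one, whereas the lower sum defining $v_{n,i}$ picks up an extra factor of $q$ under the same shift. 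Extracting the $j=i-1$ term in each sum yields the first-order relations
\[
u_{n,i}=pu_{n,i-1}-p^iqa_{n-1,i-1}, \qquad v_{n,i}=pqv_{n,i-1}+p^iq^2a_{n-1,i-1},
\]
valid for $i\geq 2$. The homogeneous multipliers $p$ and $pq$ appearing here are exactly what produce the characteristic polynomial $(x-p)(x-pq)=x^2-p(q+1)x+p^2q$ underlying \eqref{aeq2}.

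The key step is then to apply the combination that annihilates both homogeneous parts. Forming $a_{n,i}-p(q+1)a_{n,i-1}+p^2qa_{n,i-2}$ and substituting $a_{n,i}=u_{n,i}+v_{n,i}$, the $u$-terms collapse as $(u_{n,i}-pu_{n,i-1})-pq(u_{n,i-1}-pu_{n,i-2})$ and the $v$-terms as $(v_{n,i}-pqv_{n,i-1})-p(v_{n,i-1}-pqv_{n,i-2})$. Using the two first-order relations to evaluate each difference, the whole expression reduces to $-p^iqa_{n-1,i-1}+p^iq^2a_{n-1,i-2}$ from the $u$-part and $p^iq^2a_{n-1,i-1}-p^iq^2a_{n-1,i-2}$ from the $v$-part; the $a_{n-1,i-2}$ contributions cancel and what survives is precisely $p^iq(q-1)a_{n-1,i-1}$, which is \eqref{aeq2}. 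Because this uses the first-order relations at both indices $i$ and $i-1$, one needs $i-1\geq 2$, i.e.\ $i\geq 3$, which accounts for the stated range.

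Finally, the initial columns $a_{n,1}$ and $a_{n,2}$ must be verified separately, directly from \eqref{lem1e1}. Setting $i=1$ gives $a_{n,1}=pq\sum_{j=1}^{n-1}a_{n-1,j}=pqa_{n-1}(1)$, and setting $i=2$ gives $a_{n,2}=p^2q\big(\sum_{j=2}^{n-1}a_{n-1,j}+qa_{n-1,1}\big)=p^2qa_{n-1}(1)+p^2q(q-1)a_{n-1,1}=pa_{n,1}+p^2q(q-1)a_{n-1,1}$, as required, while $a_{1,1}=pq^2$ is inherited from Lemma \ref{lem1}. I do not expect any genuine obstacle in this argument: the work lies entirely in the bookkeeping of the index shifts, and the only delicate point is confirming that the $a_{n-1,i-2}$ terms cancel so that the inhomogeneity involves $a_{n-1,i-1}$ alone.
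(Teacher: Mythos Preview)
Your argument is correct and is essentially the paper's own proof: both take the linear combination $a_{n,i}-p(q+1)a_{n,i-1}+p^2qa_{n,i-2}$ and telescope the sums from Lemma~\ref{lem1}, with the paper computing $a_{n,i}-pa_{n,i-1}$ directly as a single geometric sum (its equation~\eqref{aeq3}) and then subtracting a $pq$-shift, whereas you track the two pieces $u_{n,i}$ and $v_{n,i}$ separately before recombining. The boundary checks for $i=1,2$ match as well.
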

\begin{proof}
Consider the difference $a_{n,i}-pa_{n,i-1}$, which by \eqref{lem1e1} is given by
\begin{align}
a_{n,i}-pa_{n,i-1}&=-p^iqa_{n-1,i-1}+p^iq\sum_{j=1}^{i-1}q^{i-j}a_{n-1,j}-p^iq\sum_{j=1}^{i-2}q^{i-j-1}a_{n-1,j}\notag\\
&=p^iq(q-1)\sum_{j=1}^{i-1}q^{i-j-1}a_{n-1,j}, \qquad 2 \leq i \leq n. \label{aeq3}
\end{align}
By \eqref{aeq3}, we then have for $3 \leq i \leq n$,
$$(a_{n,i}-pa_{n,i-1})-pq(a_{n,i-1}-pa_{n,i-2})=p^iq(q-1)\left(\sum_{j=1}^{i-1}q^{i-j-1}a_{n-1,j}-\sum_{j=1}^{i-2}q^{i-j-1}a_{n-1,j}\right),$$
so that
$$a_{n,i}-p(q+1)a_{n,i-1}+p^2qa_{n,i-2}=p^iq(q-1)a_{n-1,i-1},$$
which gives \eqref{aeq2}.  The initial condition for $a_{n,i}$ when $i=1$ follows from the definitions, upon appending 1 to any member of $I_{n-1}$.  Taking $i=2$ in \eqref{aeq3} gives the formula for $a_{n,2}$.
\end{proof}

We conclude this section by finding an expression for the ordinary generating function of $a_n(y)$ when $q=1$. Define $A(x,y)=A(x,y;p,q)=\sum_{n\geq1}a_n(y)x^n$.
By \eqref{perime2}, we have
\begin{align}\label{eqPP1}
A(x,y)&=xypq^2+\frac{xypq}{1-yp}(A(x,1)-A(x,yp))
+\frac{xypq^2}{1-ypq}\left(A(x,yp)-ypqA(xypq,1/q)\right).
\end{align}
By \eqref{eqPP1} with $y=q=1$, we get
$$A(x,1;p,1)=\frac{xp(1-p)}{1-p-xp}-\frac{xp^2}{1-p-xp}A(xp,1;p,1).$$
Iterating the last expression (where it is assumed $|x|,|p|<1$) yields
\begin{equation}\label{ogfa(n)}
A(x,1;p,1)=x(1-p)\sum_{j\geq0}\frac{(-1)^jx^jp^{j+\binom{j+2}{2}}}
{\prod_{i=0}^{j}(1-p-xp^{i+1})}.
\end{equation}
By \eqref{eqPP1} with $q=1$, we have
$$A(x,y;p,1)=xyp+\frac{xyp}{1-yp}(A(x,1;p,1)-ypA(xyp,1;p,1)),$$
which by \eqref{ogfa(n)} implies the following result.
\begin{theorem}\label{ogfa(n)t}
The (ordinary) generating function $\sum_{n\geq 1}a_n(y;p,1)x^n$ is given by
\begin{equation}\label{ogfa(n)te1}
A(x,y;p,1)=xyp+\frac{x^2yp(1-p)}{1-yp}\left(\sum_{j\geq0}\frac{(-1)^jx^jp^{j+\binom{j+2}{2}}}
{\prod_{i=0}^{j}(1-p-xp^{i+1})}-y^2p^2\sum_{j\geq0}\frac{(-1)^jx^jy^jp^{2j+\binom{j+2}{2}}}
{\prod_{i=0}^{j}(1-p-xyp^{i+2})}\right).
\end{equation}
\end{theorem}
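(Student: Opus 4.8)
The plan is to prove Theorem~\ref{ogfa(n)t} by direct substitution, using the two facts already established in the excerpt: the $q=1$ specialization of the functional equation~\eqref{eqPP1}, namely
\[
A(x,y;p,1)=xyp+\frac{xyp}{1-yp}\bigl(A(x,1;p,1)-yp\,A(xyp,1;p,1)\bigr),
\]
and the closed form~\eqref{ogfa(n)} for $A(x,1;p,1)$. The point is that the right-hand side of the displayed equation involves the function $A(\cdot,1;p,1)$ only through its first argument, evaluated at $x$ and at $xyp$; since~\eqref{ogfa(n)} is an explicit expression in its single relevant variable, it suffices to insert that expression at both evaluation points and simplify.

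First I would copy~\eqref{ogfa(n)} verbatim for the term $A(x,1;p,1)$. The real work is computing $A(xyp,1;p,1)$, obtained from~\eqref{ogfa(n)} by the replacement $x\mapsto xyp$. Carrying out this substitution term by term in the $j$-th summand: the prefactor $x(1-p)$ becomes $xyp(1-p)$; the numerator factor $(-1)^jx^jp^{\,j+\binom{j+2}{2}}$ becomes $(-1)^j(xyp)^jp^{\,j+\binom{j+2}{2}}=(-1)^jx^jy^jp^{\,2j+\binom{j+2}{2}}$, where the extra $p^j$ coming from $(xyp)^j$ raises the $p$-exponent from $j+\binom{j+2}{2}$ to $2j+\binom{j+2}{2}$; and each denominator factor $1-p-xp^{i+1}$ becomes $1-p-xyp\cdot p^{i+1}=1-p-xyp^{i+2}$. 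Thus
\[
A(xyp,1;p,1)=xyp(1-p)\sum_{j\ge0}\frac{(-1)^jx^jy^jp^{\,2j+\binom{j+2}{2}}}{\prod_{i=0}^{j}\bigl(1-p-xyp^{i+2}\bigr)}.
\]

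Finally I would assemble the two pieces. Multiplying $A(xyp,1;p,1)$ by the factor $-yp$ appearing in the functional equation produces a coefficient $-xy^2p^2(1-p)$ in front of its sum, while $A(x,1;p,1)$ carries the coefficient $x(1-p)$. Pulling the common factor $x(1-p)$ out, together with the prefactor $\tfrac{xyp}{1-yp}$, yields the overall coefficient $\tfrac{x^2yp(1-p)}{1-yp}$ multiplying the difference of the two sums, the second sum acquiring the residual weight $y^2p^2$; adding back the standalone term $xyp$ gives exactly~\eqref{ogfa(n)te1}. The only delicate point is the exponent bookkeeping in the substitution $x\mapsto xyp$---specifically keeping the $p$-power $2j+\binom{j+2}{2}$ and the shift of the denominator from $p^{i+1}$ to $p^{i+2}$ straight---so I would cross-check the resulting identity against the low-order expansion already recorded for $a_n(y)$ (the coefficients of $x$, $x^2$, and $x^3$). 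Since~\eqref{ogfa(n)} was derived under $|x|,|p|<1$, the substitution $x\mapsto xyp$ is legitimate either in that domain or, equally, as an identity of formal power series, so no separate convergence argument is needed.
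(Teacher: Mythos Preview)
Your proposal is correct and follows exactly the route the paper takes: the paper derives the $q=1$ functional equation $A(x,y;p,1)=xyp+\frac{xyp}{1-yp}\bigl(A(x,1;p,1)-yp\,A(xyp,1;p,1)\bigr)$ from~\eqref{eqPP1}, and then states that substituting~\eqref{ogfa(n)} yields the theorem. You simply make explicit the substitution $x\mapsto xyp$ and the resulting exponent shifts that the paper leaves to the reader.
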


\section{Levels, descents and ascents}

Recall that a \emph{level}, \emph{descent} or \emph{ascent} within a word $w=w_1w_2\cdots$ is an index $i$ such that $w_i=w_{i+1}$, $w_i>w_{i+1}$ or $w_i<w_{i+1}$, respectively. Given $n \geq 1$ and $1 \leq i \leq n$, let $b_{n,i}(p,q,r)$ denote the joint distribution for the level, descent and ascent statistics on $I_{n,i}$, marked by $p$, $q$ and $r$, respectively.  Considering whether the penultimate letter $j$ of a member of $I_{n,i}$ for $1 \leq i \leq n-1$ is equal to, greater than or less than $i$ yields the following recurrence for $b_{n,i}(p,q,r)$, where the condition for $i=n$ follows from observing that all members of $I_{n,n}$ must end in an ascent.
\begin{lemma}\label{ldalem}
If $n \geq 2$ and $1 \leq i \leq n-1$, then
\begin{equation}\label{ldaleme1}
b_{n,i}(p,q,r)=pb_{n-1,i}(p,q,r)+q\sum_{j=i+1}^{n-1}b_{n-1,j}(p,q,r)+r\sum_{j=1}^{i-1}b_{n-1,j}(p,q,r),
\end{equation}
with $b_{n,n}(p,q,r)=r\sum_{i=1}^{n-1}b_{n-1,i}(p,q,r)$ for $n \geq 2$ and $b_{1,1}(p,q,r)=1$.
\end{lemma}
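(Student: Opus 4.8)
The plan is to condition on the value $j$ of the penultimate letter $\rho_{n-1}$ of a member $\rho=\rho_1\cdots\rho_n \in I_{n,i}$, exploiting deletion of the last letter as a bijection. Removing $\rho_n=i$ yields $\rho'=\rho_1\cdots\rho_{n-1}$, which lies in $I_{n-1,j}$ for some $1 \le j \le n-1$, since the last entry of a length-$(n-1)$ inversion sequence is at most $n-1$. Conversely, for any $i$ with $1 \le i \le n$, appending $i$ to an arbitrary $\rho' \in I_{n-1}$ produces a valid member of $I_{n,i}$, so deletion of the final letter is a bijection from $I_{n,i}$ onto $\bigcup_{j=1}^{n-1} I_{n-1,j}=I_{n-1}$. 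I would set up this correspondence first.

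Next I would record how the three statistics transform under the deletion. Since $\rho'$ retains the comparisons of $\rho$ at positions $1,\ldots,n-2$, and the only comparison of $\rho$ lost in passing to $\rho'$ is the one at position $n-1$ between $\rho_{n-1}=j$ and $\rho_n=i$, we have $\mathrm{lev}(\rho)=\mathrm{lev}(\rho')+[\,j=i\,]$, $\mathrm{des}(\rho)=\mathrm{des}(\rho')+[\,j>i\,]$ and $\mathrm{asc}(\rho)=\mathrm{asc}(\rho')+[\,j<i\,]$. Hence the weight $p^{\mathrm{lev}(\rho)}q^{\mathrm{des}(\rho)}r^{\mathrm{asc}(\rho)}$ equals that of $\rho'$ times a single factor of $p$, $q$ or $r$ according as $j=i$, $j>i$ or $j<i$. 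This statistic-decomposition identity is the one observation on which the whole argument rests.

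Summing over $\rho \in I_{n,i}$ and grouping the resulting $\rho'$ by the value $j$ then gives $b_{n,i}=\sum_{j=1}^{n-1}b_{n-1,j}\,p^{[\,j=i\,]}q^{[\,j>i\,]}r^{[\,j<i\,]}$. I would split this sum into the three ranges $j=i$, $i+1 \le j \le n-1$ and $1 \le j \le i-1$, which yields exactly the level term $p\,b_{n-1,i}$, the descent sum $q\sum_{j=i+1}^{n-1}b_{n-1,j}$ and the ascent sum $r\sum_{j=1}^{i-1}b_{n-1,j}$ of \eqref{ldaleme1}. For the boundary value $i=n$, every penultimate letter satisfies $j \le n-1 < n=i$, so only ascents can occur at the final position, collapsing the sum to $b_{n,n}=r\sum_{j=1}^{n-1}b_{n-1,j}$; the base case $b_{1,1}=1$ is immediate, since the unique sequence $(1) \in I_{1,1}$ has no adjacent comparisons.

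I expect no serious obstacle: the mathematical content is the statistic-decomposition identity above, and the rest is bookkeeping of summation ranges. The only points requiring care are the degenerate instances, namely the empty descent sum when $i=n-1$ and the observation that the main recurrence already subsumes the case $i=n$ once one notes $b_{n-1,n}=0$, which is why that case is recorded separately for clarity.
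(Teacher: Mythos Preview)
Your proposal is correct and follows exactly the approach the paper takes: the paper simply states, before the lemma, that the recurrence follows by considering whether the penultimate letter $j$ of a member of $I_{n,i}$ is equal to, greater than, or less than $i$, and that the $i=n$ case holds because such members must end in an ascent. Your write-up merely fleshes out this one-sentence justification with the explicit bijection and statistic bookkeeping.
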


Let $b_{n,i}=b_{n,i}(p,q,r)$ and $b_n(y)=b_n(y;p,q,r)$ be given by $b_{n}(y)=\sum_{i=1}^nb_{n,i}y^i$ for $n \geq 1$.
For example, $b_1(y)=y$, $b_2(y)=yp+y^2r$, $b_3(y)=y(qr+p^2)+2y^2pr+y^3r(p+r)$.

By \eqref{ldaleme1}, we have
\begin{align}
b_n(y)&-y^nrb_{n-1}(1)=p\sum_{i=1}^{n-1}b_{n-1,i}y^i+q\sum_{i=1}^{n-2}y^i\sum_{j=i+1}^{n-1}b_{n-1,j}+r\sum_{i=2}^{n-1}y^i\sum_{j=1}^{i-1}b_{n-1,j}\notag\\
&=pb_{n-1}(y)+q\sum_{j=1}^{n-1}b_{n-1,j}\sum_{i=1}^{j-1}y^i+r\sum_{j=1}^{n-1}b_{n-1,j}\sum_{i=j+1}^{n-1}y^i \label{ldae0}\\
&=pb_{n-1}(y)+\frac{q}{1-y}\left(yb_{n-1}(1)-b_{n-1}(y)\right)+\frac{yr}{1-y}\left(b_{n-1}(y)-y^{n-1}b_{n-1}(1)\right),\notag
\end{align}
which may be rewritten as
\begin{equation}\label{ldae1}
b_n(y)=\left(p+\frac{yr-q}{1-y}\right)b_{n-1}(y)+\frac{y(q-y^nr)}{1-y}b_{n-1}(1), \qquad n \geq 2.
\end{equation}

Let $H_n=\sum_{i=1}^n\frac{1}{i}$ denote the $n$-th harmonic number.

\begin{corollary}\label{totals}
The total number of levels, descents and ascents in all members of $I_n$ for $n \geq 1$ is given by $n!(H_n-1)$, $\frac{1}{2}(n+1)!-n!H_n$ and $\frac{n-1}{2}n!$, respectively.
\end{corollary}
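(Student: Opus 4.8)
The plan is to extract each of the three totals by differentiating the governing recurrence \eqref{ldae1} with respect to the appropriate variable, evaluating at $p=q=r=1$, and then summing over $i$ (equivalently, setting $y=1$). First I would record the unweighted base case: setting $p=q=r=1$ in \eqref{ldae1} gives $b_n(y;1,1,1)=\sum_{i=1}^n b_{n,i}y^i$ with $b_{n,i}|_{p=q=r=1}=(n-1)!$ for each $i$, so that $b_n(1;1,1,1)=n\cdot(n-1)!=n!$, reflecting $|I_n|=n!$. This gives the normalization I will need when the cross terms from differentiation are evaluated.

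For the level total, define $L_n=\frac{\partial}{\partial p}b_n(1;p,1,1)\big|_{p=1}$, and similarly $D_n$ and $A_n$ for descents (derivative in $q$) and ascents (derivative in $r$). Differentiating \eqref{ldae1} in $p$ and setting $y=1$ requires care because of the apparent $\frac{1}{1-y}$ singularities; the cleanest route is to differentiate first with $y$ a free variable, keeping the full polynomial identity \eqref{ldae1} (whose right side is manifestly a polynomial in $y$ after the cancellations already performed in the derivation), and only then set $y=1$. Applying $\frac{\partial}{\partial p}|_{p=1}$ to \eqref{ldae1} produces a recurrence of the form $L_n(y)=b_{n-1}(y;1,1,1)+\big(\text{coefficient}\big)L_{n-1}(y)+\cdots$, where the homogeneous multiplier at $p=q=r=1$ is the limit of $\big(1+\frac{y-1}{1-y}\big)=\ldots$ taken carefully; I would verify that at $y=1$ this reduces to a simple linear recurrence in $n$. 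The forcing term $b_{n-1}(1;1,1,1)=(n-1)!$ is exactly what generates the harmonic number: I expect $L_n$ to satisfy $L_n=nL_{n-1}+(n-1)!\cdot(\text{const})$, whose solution by summation is $n!\sum 1/k$ up to the $-1$ correction, i.e. $n!(H_n-1)$. The same mechanism, with the $q$-derivative feeding the term $\frac{y(q-y^n r)}{1-y}b_{n-1}(1)$ and the $r$-derivative feeding the $\frac{yr-q}{1-y}$ piece, should yield the descent and ascent totals.

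The main obstacle will be handling the $y\to 1$ limits correctly: several coefficients in \eqref{ldae1} are rational in $y$ with a pole at $y=1$, and after differentiating in $p$, $q$, or $r$ one must expand numerator and denominator to first order in $(1-y)$ to read off the finite value at $y=1$. In particular the factor $\frac{y(q-y^n r)}{1-y}$ contributes a term whose $y=1$ value involves the derivative $\frac{d}{dy}(q-y^n r)|_{y=1}=-nr$, so the $n$-dependence entering $D_n$ and $A_n$ comes precisely from this expansion; this is the step most prone to sign and off-by-one errors. Once each of the three linear recurrences in $n$ is isolated, solving them is routine: the ascent recurrence should telescope to $\frac{n-1}{2}n!$ directly, while the descent recurrence $D_n=nD_{n-1}+(\text{polynomial in }n)\cdot(n-1)!$ solves to $\frac{1}{2}(n+1)!-n!H_n$, which I would confirm by induction. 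As a consistency check I would verify $L_n+D_n+A_n=(n-1)\cdot n!$, since every member of $I_n$ has exactly $n-1$ consecutive pairs each counted once as a level, descent, or ascent; indeed $n!(H_n-1)+\big(\tfrac12(n+1)!-n!H_n\big)+\tfrac{n-1}{2}n!=(n-1)n!$, which I would present as the closing verification.
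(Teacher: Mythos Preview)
Your proposal is correct and follows essentially the same route as the paper: differentiate \eqref{ldae1} with respect to each of $p,q,r$, set all parameters to $1$, obtain a first-order linear recurrence $L_n=nL_{n-1}+(n-1)!$, $D_n=nD_{n-1}+\frac{n-2}{2}(n-1)!$, $A_n=nA_{n-1}+\frac{n!}{2}$, and solve by induction. The paper treats the $y\to 1$ limit more briskly than you do (simply asserting the resulting recurrences), while your explicit warning about expanding the rational coefficients to first order in $1-y$ is exactly the right way to justify that step; your closing identity $L_n+D_n+A_n=(n-1)\,n!$ is a pleasant consistency check that the paper in fact uses later, in its combinatorial re-proof of the same corollary.
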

\begin{proof}
By differentiating both sides of \eqref{ldae1} with respect to $p$ when $y=q=r=1$, setting $p=1$ and making use of the fact $b_n(y)=n!$ when all arguments are unity, we have
$\frac{\partial}{\partial p}b_n(1)\mid_{p=q=r=1}=
(n-1)!+n\frac{\partial}{\partial p}b_{n-1}(1)\mid_{p=q=r=1}$ for $n \geq 2$, with $\frac{\partial}{\partial p}b_1(1)\mid_{p=q=r=1}=0$. By induction, this yields $\frac{\partial}{\partial p}b_n(1)\mid_{p=q=r=1}=n!\sum_{j=2}^n\frac{1}{j}=n!(H_n-1)$, which implies the first formula.
Differentiating \eqref{ldae1} at $q=1$ with $y=p=r=1$, we have in a similar fashion
$\frac{\partial}{\partial q}b_n(1)\mid_{p=q=r=1}=
\frac{n-2}{2}(n-1)!+n\frac{\partial}{\partial q}b_{n-1}(1)\mid_{p=q=r=1}$ for $n \geq 2$, with $\frac{\partial}{\partial q}b_n(1)\mid_{p=q=r=1}=0$. By induction, we get $\frac{\partial}{\partial q}b_n(1)\mid_{p=q=r=1}=\frac{1}{2}(n+1)!-n!H_n$.
Finally, differentiating \eqref{ldae1} with respect to $r$ at $r=1$ with $y=p=q=1$ yields
$\frac{\partial}{\partial r}b_n(1)\mid_{p=q=r=1}=\frac{n-1}{2}n!$, which gives the last formula and completes the proof.
\end{proof}

Let $g(x,y)=g(x,y;p,q,r)$ be given by $g(x,y)=\sum_{n\geq 1}b_n(y)\frac{x^n}{n!}$.  Then \eqref{ldae1} may be rewritten in terms of generating functions as follows.

\begin{theorem}\label{ldath}
We have
\begin{align}
\frac{\partial}{\partial x}g(x,y)&=y+\left(p+\frac{yr-q}{1-y}\right)g(x,y)+\frac{y}{1-y}\left(qg(x,1)-yrg(xy,1)\right).\label{gxye1}
\end{align}
\end{theorem}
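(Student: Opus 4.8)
The plan is to convert the recurrence \eqref{ldae1} into the stated functional differential equation by multiplying both sides by $\frac{x^{n-1}}{(n-1)!}$ and summing over $n \geq 2$, then recognizing each resulting sum as a (possibly rescaled) copy of $g$. This is the same passage from recurrence to generating function that was carried out for the area/perimeter distribution leading to \eqref{fxye1}, so I expect the argument to be entirely routine apart from one bookkeeping point.

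First I would treat the left-hand side. Since $\frac{\partial}{\partial x}g(x,y)=\sum_{n\geq 1}b_n(y)\frac{x^{n-1}}{(n-1)!}$ and $b_1(y)=y$, the sum $\sum_{n\geq 2}b_n(y)\frac{x^{n-1}}{(n-1)!}$ equals $\frac{\partial}{\partial x}g(x,y)-y$. This accounts simultaneously for the derivative on the left of \eqref{gxye1} and for the additive constant $y$ appearing on the right. Next, the two terms in \eqref{ldae1} whose coefficients do not depend on $n$ reassemble directly after the index shift $m=n-1$: the term $\left(p+\frac{yr-q}{1-y}\right)b_{n-1}(y)$ contributes $\left(p+\frac{yr-q}{1-y}\right)g(x,y)$, and the term $\frac{yq}{1-y}b_{n-1}(1)$ contributes $\frac{yq}{1-y}g(x,1)$, using $\sum_{m\geq 1}b_m(y)\frac{x^m}{m!}=g(x,y)$.

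The one step requiring care, and the only real obstacle, is the term $-\frac{y^{n+1}r}{1-y}b_{n-1}(1)$, in which the power of $y$ depends on the summation index. Here I would factor out $y^2$ and absorb the residual $y^{n-1}=y^m$ into the power of $x$ by writing $y^m\frac{x^m}{m!}=\frac{(xy)^m}{m!}$. The sum then becomes $-\frac{ry^2}{1-y}\sum_{m\geq 1}b_m(1)\frac{(xy)^m}{m!}=-\frac{ry^2}{1-y}g(xy,1)$, which is exactly the rescaled evaluation $g(xy,1)$ appearing in \eqref{gxye1}; this substitution $x\mapsto xy$ is the analogue of the $xypq$-rescaling seen in \eqref{fxye1}. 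Combining this with the $\frac{yq}{1-y}g(x,1)$ term yields $\frac{y}{1-y}\bigl(qg(x,1)-yrg(xy,1)\bigr)$.

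Assembling the pieces gives
\begin{equation*}
\frac{\partial}{\partial x}g(x,y)-y=\left(p+\frac{yr-q}{1-y}\right)g(x,y)+\frac{y}{1-y}\left(qg(x,1)-yrg(xy,1)\right),
\end{equation*}
and transposing the $y$ to the right-hand side produces \eqref{gxye1}. All manipulations are valid at the level of formal power series in $x$, so no convergence considerations intervene; the entire content of the proof is the correct handling of the $n$-dependent factor $y^{n}$ via the rescaling $x\mapsto xy$.
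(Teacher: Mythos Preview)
Your proposal is correct and follows exactly the approach the paper indicates: it multiplies \eqref{ldae1} by $\frac{x^{n-1}}{(n-1)!}$, sums over $n\geq 2$, and identifies each resulting sum with the appropriate evaluation of $g$, handling the $y^n$ factor via the substitution $x\mapsto xy$. This is precisely what the paper means by ``\eqref{ldae1} may be rewritten in terms of generating functions,'' so there is nothing to add.
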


Note that \eqref{gxye1} with $p=q=r=1$ gives
\begin{align*}
\frac{\partial}{\partial x}g(x,y;1,1,1)&=y+\frac{y}{1-y}g(x,1;1,1,1)-\frac{y^2}{1-y}g(xy,1;1,1,1),
\end{align*}
which implies $$g(x,y;1,1,1)=\frac{y}{1-y}\ln\left(\frac{1-xy}{1-x}\right),$$
as expected, since $f(x,y)$ and $g(x,y)$ agree when all other arguments are unity.  In particular,  $g(x,1;1,1,1)=\frac{x}{1-x}$.

One may extend the results of Corollary \ref{totals} above as follows by making use of \eqref{gxye1}.

\begin{theorem}
The generating functions for the totals of the levels, descents and ascents statistics on $I_{n,j}$ for $n \geq 1$ and $1 \leq j \leq n$ are given respectively
by
\begin{equation}\label{tote1}
\frac{\partial}{\partial p}g(x,y;p,1,1)\mid_{p=1}=xy+\frac{2(xy-y-1)\ln(1-xy)-2y(x-2)\ln(1-x)-y(\ln^2(1-xy)-\ln^2(1-x))}{2(1-y)},
\end{equation}
\begin{align}
\frac{\partial}{\partial q}g(x,y;1,q,1)\mid_{q=1}&=\frac{xy}{2(1-y)}\left(\frac{3x-2}{1-x}-\frac{xy^2}{1-xy}\right)+\frac{(1-xy)\ln(1-xy)-y(2-x-y)\ln(1-x)}{(1-y)^2}\notag\\
&\quad+\frac{y(\ln^2(1-xy)-\ln^2(1-x))}{2(1-y)},\label{tote2}
\end{align}
and
\begin{equation}\label{tote3}
\frac{\partial}{\partial r}g(x,y;1,1,r)\mid_{r=1}=\frac{xy}{2(1-y)}\left(\frac{2-x}{1-x}-\frac{xy^2}{1-xy}\right)+\frac{y(1-xy)}{(1-y)^2}\ln\left(\frac{1-x}{1-xy}\right).
\end{equation}
\end{theorem}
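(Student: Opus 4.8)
The three identities have a common origin in the functional differential equation \eqref{gxye1}, and I would treat them in parallel, exactly as the analogous quantities $Pf$ and $Qf$ were handled in Section 2. Write $P(x,y)=\frac{\partial}{\partial p}g(x,y;p,1,1)\mid_{p=1}$, and define $Q$ and $R$ analogously for the $q$- and $r$-derivatives. The plan is to differentiate \eqref{gxye1} with respect to each parameter and then set $p=q=r=1$. The decisive observation is that the coefficient $p+\frac{yr-q}{1-y}$ of $g(x,y)$ on the right-hand side of \eqref{gxye1} equals $p-1$ once $q=r=1$, hence vanishes at $p=1$; the same coefficient vanishes at the evaluation point in the $q$- and $r$-cases as well. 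Consequently the term in which the unknown partial derivative of $g$ multiplies this coefficient drops out, and each differentiation produces a \emph{pure} first-order equation in $x$ whose right-hand side involves only $g(x,y;1,1,1)$ together with the diagonal slices of the unknown. Explicitly, for the $p$-derivative one obtains
\[
\frac{\partial}{\partial x}P(x,y)=g(x,y;1,1,1)+\frac{y}{1-y}\bigl(P(x,1)-yP(xy,1)\bigr),
\]
and similar equations hold for $Q$ and $R$, the only differences being the extra explicit terms $-\frac{1}{1-y}g(x,y;1,1,1)+\frac{y}{1-y}g(x,1;1,1,1)$ (for $Q$) and $\frac{y}{1-y}g(x,y;1,1,1)-\frac{y^2}{1-y}g(xy,1;1,1,1)$ (for $R$) arising from differentiating the coefficient itself.

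The next step is to supply the diagonal slices $P(x,1)$, $Q(x,1)$ and $R(x,1)$, which are precisely the totals from Corollary \ref{totals}; since those totals carry a factor $n!$ while the slice $P(x,1)=\sum_{n\ge1}\frac{\partial}{\partial p}b_n(1)\mid_{p=q=r=1}\frac{x^n}{n!}$ divides by $n!$, the slices come out as ordinary generating functions. Using $\sum_{n\ge1}H_nx^n=-\frac{\ln(1-x)}{1-x}$, $\sum_{n\ge1}x^n=\frac{x}{1-x}$ and $\sum_{n\ge1}nx^n=\frac{x}{(1-x)^2}$, I would record
\[
P(x,1)=\frac{-\ln(1-x)-x}{1-x},\qquad R(x,1)=\frac{x^2}{2(1-x)^2},
\]
together with the corresponding closed form for $Q(x,1)$. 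Since $g(x,1;1,1,1)=\frac{x}{1-x}$, $g(xy,1;1,1,1)=\frac{xy}{1-xy}$ and $g(x,y;1,1,1)=\frac{y}{1-y}\ln\frac{1-xy}{1-x}$ are already known, every function on the right-hand side of the three equations above is now an explicit function of $x$ and $y$.

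It then remains to integrate each equation in $x$ from $0$ to $x$, using the initial condition $P(0,y)=Q(0,y)=R(0,y)=0$ (valid because $g$ has no constant term in $x$). All of the resulting integrands are elementary; the only non-rational antiderivatives needed are
\[
\int\frac{\ln(1-x)}{1-x}\,dx=-\tfrac12\ln^2(1-x),\qquad \int\frac{y\ln(1-xy)}{1-xy}\,dx=-\tfrac12\ln^2(1-xy),
\]
which account for the squared logarithms appearing in \eqref{tote1} and \eqref{tote2}; note that $R(x,1)$ is purely rational, so its integration produces only single logarithms, consistent with the absence of $\ln^2$ terms in \eqref{tote3}.

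I expect the main obstacle to be organizational rather than conceptual: collecting the numerous rational, logarithmic and squared-logarithm contributions, keeping the $\frac{1}{1-y}$ and $\frac{1}{(1-y)^2}$ prefactors straight, and checking that the apparent poles at $y=1$ cancel so that each answer is analytic there. As a safeguard I would expand the resulting closed forms as power series in $x$ and confirm agreement with the explicit polynomials $b_1(y),b_2(y),b_3(y)$ listed after Lemma \ref{ldalem}, and verify that taking $y\to1$ reproduces the totals of Corollary \ref{totals}.
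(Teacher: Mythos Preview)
Your proposal is correct and rests on the same differential equation \eqref{hxye1} that the paper derives by differentiating \eqref{gxye1}; your observation that the coefficient $p+\frac{yr-q}{1-y}$ vanishes at $p=q=r=1$, so that no term involving $P(x,y)$ (or $Q$, $R$) survives on the right, is exactly the mechanism the paper uses. The extra inhomogeneous terms you record for the $q$- and $r$-cases are also right.

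Where you and the paper diverge is in how that equation is used. The paper proceeds by \emph{verification}: it takes the stated closed form for $h(x,y)$, computes $h(x,1)$ as a limit $y\to1$ (via L'H\^opital where necessary), substitutes into the right side of \eqref{hxye1}, and checks that the result agrees with $\partial_x h(x,y)$. You instead propose a \emph{constructive} route: supply the diagonal slice $P(x,1)$ directly from Corollary \ref{totals} (so that $P(x,1)=\sum_{n\ge1}(H_n-1)x^n=\frac{-\ln(1-x)-x}{1-x}$, and similarly for $Q,R$), which turns the right side into an explicit function of $x$ and $y$, and then integrate in $x$ with the initial condition $P(0,y)=0$. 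Your approach has the advantage of actually producing the formulas rather than merely confirming them, and it explains transparently why $\ln^2$ terms appear in \eqref{tote1} and \eqref{tote2} but not in \eqref{tote3}; the paper's approach trades this for slightly less bookkeeping, since one need not collect the antiderivatives. Your planned sanity checks (series expansion against $b_1,b_2,b_3$ and the $y\to1$ limit matching Corollary \ref{totals}) are appropriate and would close the loop.
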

\begin{proof}
Let $h(x,y)=\frac{\partial}{\partial p}g(x,y;p,1,1)\mid_{p=1}$ and note by \eqref{gxye1} that $h(x,y)$ must satisfy
\begin{equation}\label{hxye1}
\frac{\partial}{\partial x}h(x,y)=g(x,y;1,1,1)+\frac{y}{1-y}h(x,1)-\frac{y^2}{1-y}h(xy,1).
\end{equation}
To show that the purported formula for $h(x,y)$ indeed satisfies \eqref{hxye1}, one must write $h(x,1)=\lim_{z\rightarrow 1}h(x,z)$ to evaluate the right-hand side.  Upon observing
$$\lim_{z\rightarrow 1}\left(\frac{\ln^2(1-xz)-\ln^2(1-x)}{1-z}\right)=\frac{2x\ln(1-x)}{1-x}$$
and
$$\lim_{z\rightarrow1}\left(\frac{(1+z-xz)\ln(1-xz)-z(2-x)\ln(1-x)}{1-z}\right)=\ln(1-x)+\frac{x(2-x)}{1-x},$$
and recalling the expression for $g(x,y;1,1,1)$, one gets
$$\frac{y}{1-y}\ln\left(\frac{1-xy}{1-x}\right)-\frac{xy+y\ln(1-x)}{(1-x)(1-y)}+\frac{xy^3+y^2\ln(1-xy)}{(1-y)(1-xy)}$$
for the right side of \eqref{hxye1}.  This is seen to coincide with $\frac{\partial}{\partial x}h(x,y)$, which completes the proof of \eqref{tote1}.  Similar proofs apply to \eqref{tote2} and \eqref{tote3}. Note that for \eqref{tote2}, it is convenient to first write the formula for $\frac{\partial}{\partial q}g(x,y;1,q,1)\mid_{q=1}$ as
\begin{align*}
&\frac{x^2y}{2(1-y)}\left(\frac{1}{1-x}-\frac{y^2}{1-xy}\right)\\
&+\frac{y(\ln^2(1-xy)-\ln^2(1-y))}{2(1-y)}+\frac{(1-xy)\ln(1-xy)-y(2-x-y)\ln(1-x)-xy(1-y)}{(1-y)^2}
\end{align*}
and compute three separate limits as $y \rightarrow 1$, where the third expression requires two applications of L'Hopital's rule.  For \eqref{tote3}, it is best to write the formula for $\frac{\partial}{\partial r}g(x,y;1,1,r)\mid_{r=1}$ as
\begin{align*}
&\frac{x^2y}{2(1-y)}\left(\frac{1}{1-x}-\frac{y^2}{1-xy}\right)+\frac{y(1-xy)(\ln(1-x)-\ln(1-xy))+xy(1-y)}{(1-y)^2}
\end{align*}
prior to computing the limit.
\end{proof}

Let $c(n,k)$ for $1 \leq k \leq n$ and $e(n,k)$ for $ 0 \leq k \leq n-1$ denote the (signless) Stirling number of the first kind and Eulerian number, respectively.  Recall that the number of permutations of $[n]$ with $k$ cycles and $k$ ascents is given by $c(n,k)$ and $e(n,k)$, respectively; see, e.g., \cite[Sections~6.1~and~6.2]{GKP}.  The following result provides a connection between inversion sequences and the Stirling and Eulerian numbers.

\begin{theorem}\label{gth2}
If $n \geq 1$, then
\begin{equation}\label{gth2e1}
b_n(1;t,1,1)=\sum_{k=0}^{n-1}c(n,k+1)t^k
\end{equation}
and
\begin{equation}\label{gth2e2}
b_n(1;1,1,t)=b_n(1;t,t,1)=\sum_{k=0}^{n-1}e(n,k)t^k.
\end{equation}
\end{theorem}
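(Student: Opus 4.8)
The plan is to prove each identity by showing that the relevant specialization of $b_n(1;\cdot)$ satisfies the same recurrence and initial condition as the corresponding Stirling or Eulerian generating polynomial. The basic tool is the recurrence \eqref{ldae1}, suitably specialized, together with the two classical recurrences $c(n,k)=c(n-1,k-1)+(n-1)c(n-1,k)$ and $e(n,k)=(k+1)e(n-1,k)+(n-k)e(n-1,k-1)$.

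For \eqref{gth2e1}, I would set $q=r=1$ and $p=t$ in \eqref{ldae1}. First I would establish a recurrence for the full polynomial $b_n(y;t,1,1)$, and then extract the value at $y=1$. The cleanest route is likely to show that $b_n(1;t,1,1)$ satisfies $b_n(1;t,1,1)=(t+n-1)b_{n-1}(1;t,1,1)$, since this recurrence with the initial value $b_1(1;t,1,1)=t$ (coming from $b_{1,1}=1$, so $b_1(y)=y$ and hence $b_1(1;t,1,1)=1$; I would need to reconcile the index shift carefully) produces exactly $\prod_{j=1}^{n-1}(t+j)=\sum_{k}c(n,k+1)t^k$, the generating function of the unsigned Stirling numbers of the first kind with the standard shift of the cycle index. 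To get this scalar recurrence I would sum \eqref{ldaleme1} with $q=r=1$ over $i$, observing that the descent and ascent contributions collapse and that the level term contributes the factor $p=t$ while the remaining off-diagonal terms contribute $(n-1)$ copies of $b_{n-1}(1)$; care is needed with the separate $b_{n,n}$ term. Matching this against the Stirling recurrence then gives \eqref{gth2e1} by induction on $n$.

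For \eqref{gth2e2}, the identity $b_n(1;1,1,t)=\sum_k e(n,k)t^k$ should follow by setting $p=q=1$ and $r=t$ and again deriving a recurrence for $b_n(1;1,1,t)$, this time from \eqref{ldae1} with these parameters. The natural target is a two-term recurrence in $k$ of the Eulerian type; the difficulty is that $b_n$ is defined via the marked polynomial $b_n(y)$, so I expect to need to track the full $y$-dependence through \eqref{ldae1} and only at the end set $y=1$. One convenient strategy is to verify the claim at the level of the exponential generating function $g(x,1;1,1,t)$ using \eqref{gxye1}, comparing with the classical Eulerian generating function $\sum_n\bigl(\sum_k e(n,k)t^k\bigr)\tfrac{x^n}{n!}=\tfrac{t-1}{t-e^{(t-1)x}}$; one then checks that $g(x,1;1,1,t)$ solves the corresponding differential equation. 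For the second equality $b_n(1;1,1,t)=b_n(1;t,t,1)$, I would seek a bijective or sign-preserving argument on $I_n$ showing that the ascent statistic is equidistributed with the statistic (levels plus descents), or equivalently exhibit an involution swapping the roles; alternatively one checks directly from \eqref{ldae1} that the two specializations satisfy the same recurrence after setting $y=1$.

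The main obstacle I anticipate is the passage from the bivariate recurrence \eqref{ldae1} (which genuinely couples $b_n(y)$ and $b_n(1)$, and involves the awkward factors $\tfrac{yr-q}{1-y}$ and $\tfrac{y(q-y^nr)}{1-y}$) to a clean scalar recurrence at $y=1$: the apparent poles at $y=1$ must cancel, and extracting the $y\to 1$ limit correctly — rather than naively substituting — is where the computation is most delicate, especially for the Eulerian case where the $y^n$ factor interacts with the limit. I would handle this by expanding the $\tfrac{1}{1-y}$ factors and isolating the constant term in $y$, or equivalently by working with \eqref{ldae0} before the geometric series are summed, which keeps the coefficient structure explicit and avoids the removable singularity altogether. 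Establishing the equidistribution $b_n(1;1,1,t)=b_n(1;t,t,1)$ cleanly is the other potentially subtle point, and a direct involution on inversion sequences would be the most satisfying way to close it.
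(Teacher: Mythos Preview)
Your proposal is correct in outline and, for \eqref{gth2e1}, is precisely the argument the paper alludes to but explicitly declines to write out (``compare the resulting equations with the known recurrences \ldots\ We leave the details to the reader''). Summing \eqref{ldaleme1} with $q=r=1$, $p=t$ over $i=1,\ldots,n-1$ and adding the $b_{n,n}$ term gives exactly $b_n(1;t,1,1)=(t+n-1)\,b_{n-1}(1;t,1,1)$, so your Stirling argument goes through cleanly; note only that the correct initial value is $b_1(1;t,1,1)=1$, not $t$, and then $\prod_{j=1}^{n-1}(t+j)$ matches $\sum_k c(n,k+1)t^k$ with no index shift to worry about.

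Where you genuinely diverge from the paper is that its main proof is \emph{bijective}. For \eqref{gth2e1} it builds an explicit map $I_n(k)\to S_n(k+1)$ by inserting $2,\ldots,n$ into cycles according to the letters of $\rho$, with a level triggering a new singleton cycle. For \eqref{gth2e2} it first proves the equidistribution $b_n(1;1,1,t)=b_n(1;t,t,1)$ via the very simple involution $\rho_i\mapsto i+1-\rho_i$ on $I_n$, which swaps ascents with (levels $+$ descents); it then gives a second explicit bijection $I_n^{(k)}\to S_n^{(k)}$ preserving ascents. Your recurrence/generating-function route for the Eulerian case is viable, but be aware that after summing over $i$ you will not get a scalar recurrence in $b_{n-1}(1)$ alone: a term $\sum_j (j-1)\,b_{n-1,j}$ survives, which is exactly the derivative term in the standard Eulerian recurrence $E_n(t)=(1+(n-1)t)E_{n-1}(t)+t(1-t)E_{n-1}'(t)$. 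So either track that derivative term explicitly, or carry out your generating-function comparison with $(t-1)/(t-e^{(t-1)x})$; both work. The paper's involution $\rho_i\mapsto i+1-\rho_i$ is the ``direct involution on inversion sequences'' you were hoping for and closes the equidistribution in one line.
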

\begin{proof}
Formulas \eqref{gth2e1} and \eqref{gth2e2} can be obtained by taking $p=t$, $r=t$ or $p=q=t$ in \eqref{ldae0} (with all other arguments equal to unity) and comparing the resulting equations with the known recurrences for the distributions $\sum_{k=0}^{n-1}c(n,k+1)t^k$ and $\sum_{k=0}^{n-1}e(n,k)t^k$.  We leave the details to the reader.  However, we find it more instructive to provide direct bijective proofs of these formulas as follows, starting with \eqref{gth2e1}.  Let $I_n(k)$ denote the subset of $I_n$ whose members have exactly $k$ levels and $S_n(k)$ the subset of $S_n$ whose members have exactly $k$ cycles.  Given $\rho=\rho_1\cdots\rho_n \in I_n(k)$, we generate a member of $S_n(k+1)$ as follows.  First let $\pi_1=(1)$ and we subsequently form permutations $\pi_2,\ldots,\pi_n$ by successively inserting the elements $2,\ldots,n$ into cycles.  Let $\ell=\rho_{j-1}$ where $2 \leq j \leq n$.  If $\rho_{j}\in[j]-\{\ell\}$ and is the $i$-th smallest member of this set, to obtain $\pi_j$ in this case, we insert the element $j$ so that it directly follows $i$ in its current cycle within $\pi_{j-1}$ (expressed in standard cycle form).  If $\rho_j=\ell$, in which case the $(j-1)$-st and $j$-th letters of $\rho$ correspond to a level, then insert the element $j$ into a new cycle by itself to obtain $\pi_j$.  Let $f(\rho)=\pi_n \in S_n(k+1)$.  For example, if $n=8$, $k=3$ and $\rho=12243377 \in I_{8}(3)$, then $\pi_1=(1)$, $\pi_2=(1,2)$, $\pi_3=(1,2),(3)$, $\pi_4=(1,2),(3,4)$, $\pi_5=(1,2),(3,5,4)$, $\pi_6=(1,2),(3,5,4),(6)$, $\pi_7=(1,2),(3,5,4),(6,7)$ so that $f(\rho)=\pi_8=(1,2),(3,5,4),(6,7),(8) \in S_8(4)$.  One may verify that $f$ is a bijection between $I_n(k)$ and $S_n(k+1)$ for $0 \leq k \leq n-1$, which implies \eqref{gth2e1}.

Given $\rho=\rho_1\cdots\rho_n \in I_n$, let $\rho'$ be obtained from $\rho$ by replacing $\rho_i$ with $i+1-\rho_i$ for $1 \leq i \leq n$.  Then it is seen that the mapping $\rho \mapsto \rho'$ is a bijection on $I_n$ which demonstrates that the ascents statistic distribution is equal to the distribution for the sum of the number of levels and descents.  Thus, to complete the proof of \eqref{gth2e2}, we need only show that $|I_n^{(k)}|=|S_n^{(k)}|$ for $0 \leq k \leq n-1$, where $I_n^{(k)}$ denotes the subset of $I_n$ and $S_n^{(k)}$ the subset of $S_n$ whose members have $k$ ascents.  Let $\rho=\rho_1\cdots\rho_n \in I_n^{(k)}$, and first write $\pi_1=1$.  If $\rho_j=\ell$ where $2 \leq j \leq n$, then to obtain $\pi_j\in S_j$, we append the letter $\ell$ to $\pi_{j-1}$ and increase all letters in $[\ell,j-1]$ by $1$. Note that the final two letters of $\pi_j$ form an ascent if and only if $\rho_{j-1}\rho_j$ within $\rho$ does.  Let $g(\rho)=\pi_n$ denote the member of $S_n$ that results after all letters $\ell$ have been appended successively as described.  For example, if $n=8$, $k=4$ and $\rho=12142473 \in I_8^{(4)}$, then $\pi_1=1$, $\pi_2=12$, $\pi_3=231$, $\pi_4=2314$, $\pi_5=34152$, $\pi_6=351624$, $\pi_7=3516247$ and thus $g(\rho)=\pi_8=46172583\in S_8^{(4)}$.  Note that since previous ascents are preserved in later steps, the permutation $\pi_n$ has the same number of ascents as $\rho$.  One may then verify that $g$ is a bijection between $I_n^{(k)}$ and $S_n^{(k)}$ for each $0 \leq k \leq n-1$, which completes the proof.
\end{proof}

\begin{remark} Note that members of $S_n$ starting with $r$ correspond (under $g^{-1}$) to members $\rho$ of $I_n$ wherein the longest subsequence of $\rho$ that is itself an inversion sequence (when its subscripts are re-indexed to start $1,2,\ldots$) and not containing the initial $1$ has length $r-1$.
\end{remark}

\begin{remark} The polynomial $b_n(1;t,1,t)$ was shown in \cite[Section~2.3.4]{Par} by a combinatorial argument to coincide with sequence A122890 in \cite{Sloane}.
\end{remark}

Using the interpretation implicit in \eqref{gth2e1}, one can provide a combinatorial proof of the formulas above for the totals of the levels, descents and ascents statistics on $I_n$ as follows. \medskip

\noindent{\textbf{Combinatorial proof of Corollary \ref{totals}.}\medskip

By \eqref{gth2e1}, the total number of levels in all members of $I_n$ equals the total number of cycles in $S_n$ minus $n!$.  By \cite[Theorem~12]{BQ}, which was provided a combinatorial proof, the average number of cycles in $S_n$ is given by $H_n$, which implies the first formula in Corollary \ref{totals}.   The bijection $\rho\mapsto\rho'$ from the proof of Theorem \ref{gth2} above shows that the ascents at index $i$ within $I_n$ for each $i$ are equal in number to the union of all levels and descents at index $i$.  As there are $(n-1)n!$ non-terminal positions within all members of $I_n$, each of which corresponds to either a level, descent or ascent, it follows that the total number of ascents in $I_n$ is given by $\frac{n-1}{2}n!$.  Finally, for descents, we subtract the first and last expressions in Corollary \ref{totals} from $(n-1)n!$ to get a total of
$$(n-1)n!-\left(n!(H_n-1)+\frac{n-1}{2}n!\right)=\frac{1}{2}(n+1)!-n!H_n,$$
which completes the proof. \hfill \qed \medskip

Let $e_n(y)$ and $o_n(y)$ denote the restriction of $b_n(y)$ to those members of $I_n$ having an even or an odd number of levels, respectively.  Note that $b_n(y;-p,q,r)=e_n(y;p,q,r)-o_n(y;p,q,r)$.  We have the following sign balance result concerning the levels statistic on $I_n$.

\begin{proposition}\label{balance}
If $n \geq 2$, then
\begin{equation}\label{bale1}
b_n(y;-t,t,t)=(-1)^n2^{n-2}t^{n-1}(y-1)y.
\end{equation}
In particular, the statistic on $I_n$ recording the number of levels is balanced for $n \geq 2$.
\end{proposition}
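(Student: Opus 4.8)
The plan is to work directly from the linear recurrence \eqref{ldae1} after the substitution $p=-t$, $q=r=t$, where a cancellation collapses it to a one-term recurrence. First I would evaluate the two coefficients appearing in \eqref{ldae1} under this substitution. For the coefficient of $b_{n-1}(y)$ one finds
$$p+\frac{yr-q}{1-y}\Big|_{p=-t,\,q=r=t}=-t+\frac{t(y-1)}{1-y}=-2t,$$
the apparent pole at $y=1$ disappearing because $yr-q=-t(1-y)$; the coefficient of $b_{n-1}(1)$ becomes $\frac{y(q-y^nr)}{1-y}=\frac{yt(1-y^n)}{1-y}$, which is the polynomial $yt(1+y+\cdots+y^{n-1})$. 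Hence, writing $B_n(y):=b_n(y;-t,t,t)$, the recurrence reduces to
$$B_n(y)=-2t\,B_{n-1}(y)+yt(1+y+\cdots+y^{n-1})B_{n-1}(1),\qquad n\geq 2.$$

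I would then establish \eqref{bale1} by induction on $n$. For the base case $n=2$, rather than use the recurrence I would read $B_2(y)$ off the explicit value $b_2(y)=yp+y^2r$, obtaining $B_2(y)=-ty+ty^2=ty(y-1)$, in agreement with the claimed $(-1)^22^{0}t^{1}(y-1)y$. For the inductive step, suppose $n\geq 3$ and that \eqref{bale1} holds with $n$ replaced by $n-1$. Since the right-hand side of \eqref{bale1} carries the factor $(y-1)$, the induction hypothesis gives $B_{n-1}(1)=0$ (and here $n-1\geq 2$ is exactly what is needed for this to apply), so the second term of the reduced recurrence vanishes and $B_n(y)=-2t\,B_{n-1}(y)$. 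Substituting the inductive formula yields $B_n(y)=-2t\cdot(-1)^{n-1}2^{n-3}t^{n-2}(y-1)y=(-1)^n2^{n-2}t^{n-1}(y-1)y$, as required.

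Finally, the ``in particular'' assertion follows by specialization: setting $y=1$ in \eqref{bale1} gives $b_n(1;-t,t,t)=0$ for all $t$ and all $n\geq 2$, and then $t=1$ gives $b_n(1;-1,1,1)=\sum_{\rho\in I_n}(-1)^{\mathrm{lev}(\rho)}=0$, which by the relation $b_n(y;-p,q,r)=e_n(y;p,q,r)-o_n(y;p,q,r)$ is precisely the statement that the levels statistic on $I_n$ is balanced.

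I do not anticipate a genuine obstacle here: the whole argument turns on spotting the cancellation $p+\frac{yr-q}{1-y}=-2t$, after which the conclusion is immediate. The one point demanding care is that the vanishing of the $B_{n-1}(1)$ term is only available once $n-1\geq 2$, so the case $n=2$ must be treated as the (directly computed) base case rather than folded into the inductive step; indeed $B_1(1)=1\neq 0$, so the reduced recurrence is genuinely inhomogeneous at that first stage.
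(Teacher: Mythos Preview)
Your argument is correct. The specialization $p=-t$, $q=r=t$ indeed collapses the coefficient $p+\frac{yr-q}{1-y}$ to $-2t$ and makes $\frac{y(q-y^nr)}{1-y}$ a polynomial, so the recurrence \eqref{ldae1} becomes $B_n(y)=-2tB_{n-1}(y)+yt(1+y+\cdots+y^{n-1})B_{n-1}(1)$; the base case $n=2$ is checked directly, and for $n\ge 3$ the induction hypothesis gives $B_{n-1}(1)=0$, reducing the recurrence to $B_n(y)=-2tB_{n-1}(y)$, from which the formula follows at once. Your care in isolating $n=2$ as a separate base case (since $B_1(1)=1\neq 0$) is exactly right.

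By way of comparison, the paper explicitly notes that an induction via \eqref{ldae0} is possible and leaves those details to the reader --- your proof supplies precisely those details, working with the equivalent rewritten form \eqref{ldae1}. The paper's primary argument, however, is bijective: it defines a sign-changing involution on $I_n$ that toggles $\rho_{j_0-1}\mapsto 3-\rho_{j_0-1}$ at the position just before the first entry exceeding $2$, and then computes the residual signed weight of the surviving binary sequences directly. Your algebraic route is shorter and mechanical once the cancellation is spotted; the paper's combinatorial route explains \emph{why} the balance occurs and identifies the survivor set responsible for the $2^{n-2}$ and the factor $(y-1)y$.
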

\begin{proof}
The latter statement follows from the former upon taking $y=1$, so we need only prove the former.  Note that one can show \eqref{bale1} by an induction argument using \eqref{ldae0} with $q=r=t=-p$, the details of which we leave to the reader.  Here, we wish to provide a direct bijective proof of \eqref{bale1} which makes use of a sign-changing involution.  Suppose $\rho=\rho_1\cdots\rho_n \in I_n$ contains at least one letter greater than 2.  Let $j_0$ be the minimal $j$ such that $\rho_j>2$.  Let $\rho^*$ be obtained from $\rho$ by replacing $\rho_{j_0-1}$ with $3-\rho_{j_0-1}$, which can be done since $j_0\geq 3$ ensures that it is not the first letter that is undergoing this replacement (which of course would not be allowed as $\rho_1=1$ for all $\rho$).  Note that $\rho^*$ has a parity with respect to the levels statistic opposite to that of $\rho$.  Thus, $\rho \mapsto \rho^*$  is a sign-changing involution that is defined on the entirety of $I_n$ except for the subset consisting of its binary members, which we will denote by $B_n$.

We now determine the (signed) weight of the members of $B_n$.  First suppose $\lambda=\lambda_1\cdots \lambda_n\in B_n$, with $\lambda_n=1$.  Then since $\lambda$ is binary, there must be an even number $m$ of indices $i \in [n-1]$ such that $\lambda_i\neq \lambda_{i+1}$ since $\lambda_1=\lambda_n=1$.  Thus, the number of levels is given by $n-1-m$ for all such $\lambda$.  In particular, all $\lambda \in B_n$ with $\lambda_n=1$ have levels parity equal to that of $n-1$.  Since there are clearly $2^{n-2}$ such $\lambda$, their (signed) weight is given by $(-t)^{n-1}2^{n-2}y$.  By similar reasoning, the weight of all $\lambda \in B_n$ with $\lambda_n=2$ is given $-(-t)^{n-1}2^{n-2}y^2$.  Combining this case with the prior one yields formula \eqref{bale1}.
\end{proof}

Considering the difference $b_{n,i}-b_{n,i-1}$, and using \eqref{ldaleme1}, yields the following three-term recurrence for the array $b_{n,i}$.
\begin{proposition} If $n \geq 3$, then
\begin{equation}\label{beq2}
b_{n,i}=b_{n,i-1}+(p-q)b_{n-1,i}+(r-p)b_{n-1,i-1}, \qquad 2 \leq i \leq n-1,
\end{equation}
with $b_{n,1}=(p-q)b_{n-1,1}+qb_{n-1}(1)$ and $b_{n,n}=rb_{n-1}(1)$ for $n \geq 2$ and $b_{1,1}=1$.
\end{proposition}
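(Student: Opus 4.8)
The plan is to obtain \eqref{beq2} directly from the Lemma~\ref{ldalem} recurrence \eqref{ldaleme1} by forming the successive difference $b_{n,i}-b_{n,i-1}$, exactly as suggested by the sentence preceding the statement. The point is that the two running sums in \eqref{ldaleme1}, one over the indices above $i$ (weighted by $q$) and one over the indices below $i$ (weighted by $r$), each change by a single term when $i$ is decremented by one, so the difference telescopes and all the summation disappears. I would first note that \eqref{ldaleme1} is valid at both index $i$ and index $i-1$ precisely when $2 \le i \le n-1$, since that is the range in which both $i$ and $i-1$ lie in $[1,n-1]$; this is the stated range of \eqref{beq2}.

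Subtracting the two instances of \eqref{ldaleme1} gives
\[
b_{n,i}-b_{n,i-1}=p(b_{n-1,i}-b_{n-1,i-1})+q\left(\sum_{j=i+1}^{n-1}-\sum_{j=i}^{n-1}\right)b_{n-1,j}+r\left(\sum_{j=1}^{i-1}-\sum_{j=1}^{i-2}\right)b_{n-1,j}.
\]
The $q$-bracket collapses to $-b_{n-1,i}$ (the descent sum loses only its smallest term) and the $r$-bracket collapses to $b_{n-1,i-1}$ (the ascent sum gains exactly one term), whence
\[
b_{n,i}-b_{n,i-1}=(p-q)b_{n-1,i}+(r-p)b_{n-1,i-1},
\]
which is \eqref{beq2} after rearranging. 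This is the heart of the argument, and it is essentially bookkeeping; the only place one must be careful is keeping the index ranges honest so that the telescoping is legitimate, but no genuine obstacle arises.

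It remains to record the boundary entries. The value $b_{n,n}=rb_{n-1}(1)$ is immediate, being the case $i=n$ of the Lemma restated via $b_{n-1}(1)=\sum_{i=1}^{n-1}b_{n-1,i}$. For $b_{n,1}$ I would set $i=1$ in \eqref{ldaleme1}, so that the $r$-sum is empty and
\[
b_{n,1}=pb_{n-1,1}+q\sum_{j=2}^{n-1}b_{n-1,j}=pb_{n-1,1}+q\bigl(b_{n-1}(1)-b_{n-1,1}\bigr)=(p-q)b_{n-1,1}+qb_{n-1}(1),
\]
using $\sum_{j=2}^{n-1}b_{n-1,j}=b_{n-1}(1)-b_{n-1,1}$. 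The base case $b_{1,1}=1$ is the stated initial condition from Lemma~\ref{ldalem}, completing the verification of all the cases in the proposition.
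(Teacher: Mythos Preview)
Your proof is correct and follows exactly the approach indicated in the paper: the paper itself merely states that the recurrence follows from considering the difference $b_{n,i}-b_{n,i-1}$ using \eqref{ldaleme1}, and you have carried out this computation in full, correctly handling the telescoping of the $q$- and $r$-sums and the boundary cases $i=1$ and $i=n$.
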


We conclude by finding an expression for the ordinary generating function of the polynomial $b_n(y)$ when $y=1$.  To do so, first define $B_n(v)=\sum_{i=1}^nb_{n,i}v^{i-1}$. Note that
$$\sum_{i=2}^{n-1}b_{n,i-1}v^{i-1}=\sum_{i=1}^{n-2}b_{n,i}v^i=v(B_n(v)-b_{n,n}v^{n-1}-b_{n,n-1}v^{n-2}).$$
By \eqref{ldaleme1} with $i=n-1$, we have
$$b_{n,n-1}=pb_{n-1,n-1}+r(B_{n-1}(1)-b_{n-1,n-1})=rB_{n-1}(1)+r(p-r)B_{n-2}(1)$$
so that
$$\sum_{i=2}^{n-1}b_{n,i-1}v^{i-1}=vB_n(v)-rv^{n-1}(1+v)B_{n-1}(1)-r(p-r)v^{n-1}B_{n-2}(1).$$
Multiplying both sides of \eqref{beq2} by $v^{i-1}$, and summing over $2 \leq i \leq n-1$, then implies
\begin{align*}
B_n(v)-rv^{n-1}B_{n-1}(1)-B_n(0)&=vB_n(v)-rv^{n-1}(1+v)B_{n-1}(1)-r(p-r)v^{n-1}B_{n-2}(1)\\
&\quad+(p-q)(B_{n-1}(v)-B_{n-1}(0))+(r-p)(vB_{n-1}(v)-rv^{n-1}B_{n-2}(1)),
\end{align*}
which may be rewritten as
\begin{equation}\label{Bneqn}
(1-v)B_n(v)=B_n(0)-(p-q)B_{n-1}(0)-rv^nB_{n-1}(1)+(p-q+(r-p)v)B_{n-1}(v), \qquad n \geq 3,
\end{equation}
with $B_n(0)=(p-q)B_{n-1}(0)+qB_{n-1}(1)$ for $n \geq 2$.  Note that \eqref{Bneqn} is also seen to hold when $n =2$.

Let $B(x,v)=\sum_{n\geq1}B_n(v)x^n$.  Then \eqref{Bneqn} may be rewritten as
\begin{equation}\label{Bxveq}
(1-v)B(x,v)=-xv+(1-(p-q)x)B(x,0)-xrvB(xv,1)+x(p-q+(r-p)v)B(x,v),
\end{equation}
with $B(x,0)=x+(p-q)xB(x,0)+xqB(x,1)$.  Applying the kernel method \cite{HT}, and letting $v=\rho(x)=\frac{1-(p-q)x}{1-(p-r)x}$ in \eqref{Bxveq}, we obtain
$$B(x,1)=\frac{\rho(x)-1}{q}+\frac{r\rho(x)}{q}B(x\rho(x),1).$$
Iteration of this last formula leads to the following result.
\begin{theorem}
The (ordinary) generating function $\sum_{n\geq1}b_{n}(1;p,q,r)x^n$ is given by
\begin{equation}\label{ogf}
B(x,1)=\sum_{j\geq0}\frac{r^j(v_j(x)-1)}{q^{j+1}}v_0(x)v_1(x)\cdots v_{j-1}(x),
\end{equation}
where $v_j(x)$ is defined recursively by $v_j(x)=\rho(xv_{j-1}(x))$ for $j \geq 1$ with $v_0(x)=\rho(x)=\frac{1-(p-q)x}{1-(p-r)x}$.
\end{theorem}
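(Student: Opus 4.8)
The plan is to start from the functional equation \eqref{Bxveq} and apply the kernel method in the manner already indicated in the excerpt, then iterate the resulting relation to obtain the closed form \eqref{ogf}. First I would isolate the $B(x,v)$ terms in \eqref{Bxveq}. Collecting them on the left gives
\begin{equation*}
\bigl(1-v-x(p-q)-x(r-p)v\bigr)B(x,v)=-xv+(1-(p-q)x)B(x,0)-xrvB(xv,1).
\end{equation*}
The coefficient of $B(x,v)$, the \emph{kernel}, is $1-x(p-q)-v(1-x(r-p))$, which vanishes precisely when $v=\frac{1-(p-q)x}{1-(p-r)x}=\rho(x)$. This is exactly the substitution flagged in the text. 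The point of the kernel method is that, for this choice of $v$, the left-hand side is identically zero, so the right-hand side must vanish as well, eliminating the unknown bivariate function $B(x,v)$ and leaving a relation purely among the univariate quantities $B(x,0)$ and $B(x,1)$.

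Next I would carry out the substitution $v=\rho(x)$ and simplify. Setting the right-hand side to zero yields
\begin{equation*}
(1-(p-q)x)B(x,0)=x\rho(x)-x\rho(x)\,r\,B(x\rho(x),1)+\text{(correction)},
\end{equation*}
but it is cleaner to first eliminate $B(x,0)$ using its own defining equation $B(x,0)=x+(p-q)xB(x,0)+xqB(x,1)$, which rearranges to $(1-(p-q)x)B(x,0)=x+xqB(x,1)$. Substituting this into the vanishing right-hand side and solving the resulting linear equation for $B(x,1)$ should produce exactly the recursive identity stated in the excerpt,
\begin{equation*}
B(x,1)=\frac{\rho(x)-1}{q}+\frac{r\rho(x)}{q}B(x\rho(x),1).
\end{equation*}
I expect this algebraic consolidation to be the step requiring the most care: one must combine the kernel-vanishing condition with the $B(x,0)$ equation and verify that the coefficients collapse to the claimed $\frac{\rho(x)-1}{q}$ and $\frac{r\rho(x)}{q}$, and in particular confirm that the $-xv$, the $\rho(x)$, and the constant $1$ assemble correctly. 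This is bookkeeping rather than anything deep, but it is where sign and normalization errors would hide.

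Finally I would iterate. The displayed relation expresses $B(x,1)$ in terms of $B(x\rho(x),1)$, i.e. the same function evaluated at the rescaled argument $x v_0(x)$ where $v_0(x)=\rho(x)$. Applying the relation again to $B(xv_0(x),1)$ replaces it with a term involving $B(xv_0(x)v_1(x),1)$, where $v_1(x)=\rho(xv_0(x))$ matches the recursive definition in the statement; continuing in this way generates the telescoping product $v_0(x)v_1(x)\cdots v_{j-1}(x)$ scaling the argument at the $j$-th stage, together with the accumulating factor $\frac{r^j}{q^{j+1}}(v_j(x)-1)$. Summing the contributions over all $j\geq 0$ gives precisely \eqref{ogf}, provided the iteration converges, i.e. the remainder term $\bigl(\frac{r\rho}{q}\cdots\bigr)B(\cdots,1)$ tends to $0$; this holds as a formal power series since each application of $\rho$ to an argument with positive $x$-valuation increases the valuation of the remainder, so only finitely many terms contribute to each coefficient of $x^n$. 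I would note this convergence as a one-line remark rather than belabor it, since it is the standard justification for such iterated kernel-method expansions.
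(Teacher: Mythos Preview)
Your proposal is correct and follows essentially the same route as the paper: apply the kernel method to \eqref{Bxveq}, use the auxiliary relation $(1-(p-q)x)B(x,0)=x+xqB(x,1)$ to eliminate $B(x,0)$, obtain the recursion $B(x,1)=\frac{\rho(x)-1}{q}+\frac{r\rho(x)}{q}B(x\rho(x),1)$, and iterate. One small slip: the kernel should read $1-x(p-q)-v\bigl(1+x(r-p)\bigr)$, not $1-x(p-q)-v\bigl(1-x(r-p)\bigr)$; with the correct sign it indeed vanishes at $v=\frac{1-(p-q)x}{1-(p-r)x}=\rho(x)$, as you state.
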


\end{document}